\newtheorem{thm}{Theorem}[section]
\newtheorem{corr}[thm]{Corollary}
\newtheorem{lem}[thm]{Lemma}
\newtheorem{prop}[thm]{Proposition}
\theoremstyle{definition}
\theoremstyle{remark}
\newtheorem{rem}{Remark}[section]
\numberwithin{equation}{section}
\def\R{\mathbb R}
\def\SS{\mathbb S}
\def\f{\frac}
\def\ra{\rightarrow}
\def\pt{\partial}
\newcommand{\twopartdef}[4]
{
	\left\{
		\begin{array}{ll}
			#1 & \mbox{if  } #2 \bigskip \\
			#3 & \mbox{if  } #4
		\end{array}
	\right.
}
\begin{document}
\title[Smooth compactness of $f$-minimal hypersurfaces]{Smooth compactness of $f$-minimal hypersurfaces  with bounded $f$-index}
\author{Ezequiel Barbosa}
\address{Universidade Federal de Minas Gerais (UFMG), Departamento de Matem\'{a}tica, Caixa Postal 702, 30123-970, Belo Horizonte, MG, Brazil}
\email{ezequiel@mat.ufmg.br}
\author{Ben Sharp}
\address{Department of Mathematics, South Kensington Campus, Imperial College London, LONDON,
SW7 2AZ, United Kingdom}
\email{ben.g.sharp@gmail.com}
\author{Yong Wei}
\address{Department of Mathematics, University College London, Gower Street, London,WC1E 6BT, United Kingdom}
\email{yong.wei@ucl.ac.uk}
\subjclass[2010]{{53C42}, {53C21}}
\keywords{Compactness, $f$-minimal, self-shrinker, smooth metric measure space, index}
\thanks{The first author was supported by FAPEMIG and CNPq grants; The second author was supported by Andr\'e Neves' ERC and Leverhulme trust grants; The third author was supported by Jason D Lotay's EPSRC grant}
\date{\today}
\maketitle

\begin{abstract}
Let $(M^{n+1},g,e^{-f}d\mu)$ be a complete smooth metric measure space with $2\leq n\leq 6$ and Bakry-\'{E}mery Ricci curvature bounded below by a positive constant. We prove a smooth compactness theorem for the space of complete embedded $f$-minimal hypersurfaces in $M$ with uniform upper bounds on $f$-index and weighted volume. As a corollary, we obtain a smooth compactness theorem for the space of embedded self-shrinkers in $\R^{n+1}$ with $2\leq n\leq 6$. We also prove some estimates on the $f$-index of $f$-minimal hypersurfaces, and give a conformal structure of $f$-minimal surface with finite $f$-index in three-dimensional smooth metric measure space.
\end{abstract}

\section{Introduction}\label{sec:intro}
Let $(M^{n+1},g)$ be a complete smooth Riemannian manifold and $f$ be a smooth function on $M$. We denote by $\overline{\nabla}$ the Levi-Civita connection on $M$ with respect to $g$. The Bakry-\'{E}mery Ricci curvature of $M$ is defined by
\begin{equation}
Ric_f=Ric+\overline{\nabla}^2f,
\end{equation}
which is an important generalization of the Ricci curvature. The equation $Ric_f=\kappa g$ for some constant $\kappa$ is just the gradient Ricci soliton equation, which palys an important role in the singularity analysis of Ricci flow (see \cite{Cao}). Denote by $d\mu$ the volume form on $M$ with respect to $g$, then $(M^{n+1},g,e^{-f}d\mu)$ is often called a smooth metric measure space. We refer the interested readers to \cite{Wei-Wylie} for further motivation and examples of smooth metric measure spaces.

Suppose that $\Sigma$ is a hypersurface in $M$. Let $x\in \Sigma$ and $\nu$ be the unit normal vector at $x$. The mean curvature $H$ of $\Sigma$ is defined as $H=\sum_{i=1}^n\langle\overline{\nabla}_{e_i}\nu,e_i\rangle$ for an orthonormal basis $\{e_i\}_{i=1}^n\subset T\Sigma$. Define the $f$-mean curvature at $x\in \Sigma$ by
 \begin{align}
    H_f=H-\langle\overline{\nabla}f,\nu\rangle.
 \end{align}
Then $\Sigma$ is called an $f$-minimal hypersurface in $M$ if its $f$-mean curvature $H_f$ vanishes everywhere. The most well known example of smooth metric measure space is the Gaussian soliton: $(\mathbb{R}^{n+1},g_0,e^{-\f 14|x|^2}d\mu)$ whose Bakry-\'{E}mery Ricci curvature satisfies $Ric_f=\f 12g_0$, where $g_0$ is the standard Euclidean metric on $\R^{n+1}$. The $f$-minimal hypersurface in the Gaussian soliton is the self-shrinker $\Sigma^{n}\subset\R^{n+1}$ which satisfies
\begin{equation}\label{self-shr}
  H=\f 12\langle x,\nu\rangle.
\end{equation}
Self-shrinkers play an important role in the mean curvature flow, as they correspond to the self-similar solutions to mean curvature flow, and also describe all possible blow ups at a given singularity. The simplest examples of self-shrinkers in $\R^{n+1}$ are the flat hyperplanes $\R^n$ through the origin, the spheres centred at the origin $\SS^n(\sqrt{2n})$ and the cylinders $\SS^k(\sqrt{2k})\times\R^{n-k}$, $1\leq k\leq n-1$. Angenent \cite{Ang} has constructed a compact embedded self-shrinker in $\R^3$ of genus one, which is often called the Angenent torus. Recently, Brendle \cite{brendle} proved that the only compact embedded self-shrinker in $\R^3$ of genus zero is the round sphere $\SS^2(2)$. See \cite{Drug,Drug-K,Kap,Mol,Ngu} for further examples of self-shrinkers.

In 2012, Colding-Minicozzi \cite{CM09} proved the following smooth compactness theorem for the space of embedded self-shrinkers.
\begin{thm}[\cite{CM09}]\label{thm-CM09}
Given an integer $\gamma\geq 0$ and a constant $0<\Lambda<\infty$, the space of smooth complete embedded self-shrinkers $\Sigma\subset \R^{3}$ with
\begin{itemize}
  \item genus at most $\gamma$,
  \item $\pt\Sigma=\emptyset$,
  \item $Area(B_R(x_0)\cap\Sigma)\leq \Lambda R^2$ for all $x_0\in\R^{3}$ and all $R>0$
\end{itemize}
is compact in the sense that any sequence in this class contains a subsequence that converges in the $C^m$ topology on compact subsets for any $m\geq 2$.
\end{thm}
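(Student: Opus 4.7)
The plan is to establish uniform pointwise curvature bounds $|A_{\Sigma_k}|\leq C(K)$ on each compact $K\subset\R^{3}$ for any sequence $\{\Sigma_k\}$ in the class, after which Schauder-type estimates for the self-shrinker equation combined with Arzel\`a--Ascoli yield $C^m$ subsequential convergence on compact subsets. I would begin by recording that every self-shrinker is $f$-minimal in the Gaussian soliton $(\R^{3},g_0,e^{-|x|^2/4}d\mu)$, which has $Ric_f=\tfrac12 g_0$, and that \eqref{self-shr} immediately gives $|H|\leq C(K)$ on any compact $K$. Together with the hypothesis $\mathrm{Area}(B_R(x_0)\cap\Sigma)\leq\Lambda R^2$, this provides the local area and mean-curvature control needed for the regularity theory.

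Next I would invoke a Choi--Schoen-type $\varepsilon$-regularity theorem for self-shrinkers, based on the Simons-type identity
\[
\mathcal{L}|A|^2 = 2|A|^4 - 2|A|^2 + 2|\nabla A|^2, \qquad \mathcal{L}=\Delta-\tfrac12\langle x,\nabla\cdot\rangle.
\]
Combined with the local area bound this yields: there exists $\varepsilon_0>0$ so that $\int_{B_r(x_0)\cap\Sigma}|A|^2<\varepsilon_0$ implies $\sup_{B_{r/2}(x_0)\cap\Sigma}|A|\leq C/r$. The Gauss equation $|A|^2 = H^2 - 2K$ and Gauss--Bonnet, together with the fixed genus bound, uniformly control $\int_{\Sigma_k\cap K}|A|^2$ on each compact $K$ modulo possible concentration at finitely many points. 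A standard covering argument then gives smooth subsequential convergence on each compact $K$ away from at most finitely many ``bad'' points $\{p_1,\dots,p_N\}\subset K$.

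The core of the argument is to rule out these bad points. Suppose $|A_{\Sigma_k}|(q_k)\to\infty$ with $q_k\to p\in K$. Setting $r_k=|A_{\Sigma_k}|(q_k)^{-1}\to 0$, I would pass to the blow-ups $\widetilde\Sigma_k:=r_k^{-1}(\Sigma_k-q_k)$. The rescaled mean curvatures $\widetilde H_k=r_k H_k$ tend to $0$ uniformly on compact sets, and the Gaussian weight $e^{-|x|^2/4}$ becomes locally constant in the rescaled picture, so $\varepsilon$-regularity at scale $1$ extracts a smooth limit $\widetilde\Sigma\subset\R^{3}$: a complete embedded \emph{minimal} surface with $|A|(0)=1$, quadratic area growth controlled by $\Lambda$, and genus at most $\gamma$. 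By Osserman's theorem $\widetilde\Sigma$ has finite total curvature, and being non-flat contributes at least $8\pi$ to $\int|A|^2$ along with nontrivial topology. Summing these contributions over all concentration points and comparing with the total topological budget of $\Sigma_k$ via Gauss--Bonnet yields the required contradiction.

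With uniform pointwise $|A|$ bounds on each compact set in hand, Schauder theory applied to \eqref{self-shr} upgrades this to $C^m$ bounds for every $m\geq 2$. A diagonal extraction then produces a subsequence converging in $C^m_{\mathrm{loc}}$ to a limit $\Sigma_\infty$ that is again an embedded self-shrinker; embeddedness passes to the limit by the maximum principle, and the local area bound rules out higher-multiplicity convergence. The principal obstacle I anticipate is the no-bubbling step: quantifying precisely how topology and total curvature concentrate at each blow-up scale and balancing these against the fixed genus $\gamma$ relies essentially on Osserman's classification of complete embedded minimal surfaces of finite total curvature in $\R^{3}$.
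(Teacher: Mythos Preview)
The paper does not prove this theorem; it is quoted from Colding--Minicozzi \cite{CM09} as background. The paper's own compactness result is Theorem~\ref{main-thm}, where the genus bound is replaced by an $f$-index bound, and the authors explicitly remark that the relationship between Theorem~\ref{thm-CM09} and their Corollary~\ref{main-corr} ``is not well understood presently.'' So there is no ``paper's own proof'' of this statement to compare against.

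That said, your sketch is recognisably the Choi--Schoen/Colding--Minicozzi strategy, and it runs parallel to the paper's proof of Theorem~\ref{main-thm}: local singular compactness (their Proposition~\ref{prop3-1}), removal of the singular set, and then exclusion of higher multiplicity. The paper reaches local compactness via index control and Schoen--Simon regularity rather than via pointwise curvature estimates and blow-up; the remark following Proposition~\ref{prop3-1} notes that in dimension two a local genus and area bound implies a local index bound, which is why the two routes meet.

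There is, however, a genuine gap in your outline. Your final sentence asserts that ``the local area bound rules out higher-multiplicity convergence,'' and this is not correct. Uniform local bounds on $|A|$ and area give smooth graphical convergence, possibly with several sheets collapsing onto the same limit; the area bound alone does not prevent this. In both \cite{CM09} and the proof of Theorem~\ref{main-thm} here, multiplicity greater than one is excluded by an independent argument: the difference of adjacent sheets produces a positive solution of $L_f u=0$ on the limit, forcing the limit to be $f$-stable, which is impossible since $Ric_f=\tfrac12 g_0>0$ (there are no complete two-sided $f$-stable $f$-minimal hypersurfaces with finite weighted volume). Your blow-up/Osserman step addresses curvature concentration, not multiplicity; you need to insert the stability contradiction as a separate step.
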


The surfaces $\Sigma$ in the above theorem are each assumed to be homeomorphic to some  closed surface $S_\Gamma$ of genus $\Gamma\leq \gamma$ with a finite number of points removed $\Sigma \cong S_\Gamma \setminus \{p_1,\dots p_k\}$ - thus each surface has a finite number of ends. There is no uniform control on $k$, or equivalently the number of ends so the global topology of the limiting surface is \emph{a-priori} not known.
 
The third condition in Theorem \ref{thm-CM09} means that the self-shrinkers have at most Euclidean volume growth. Generally, we say that a self-shrinker $x:\Sigma^{n}\ra\R^{n+1}$ has at most Euclidean volume growth if $Area(B_R(x_0)\cap\Sigma)\leq \Lambda R^n$ where $B_R(x_0)$ is the Euclidean ball of radius $R$ centred at $x_0\in\R^{n+1}$. If the self-shrinker arises as the singularity of mean curvature flow,  Huisken's monotonicity formula implies that it has at most Euclidean volume growth, see \cite[Corollary 2.13]{CM2012}; furthermore Lu Wang \cite{Lu} proved that the entire graphical self-shrinker has at most Euclidean volume growth; and finally Ding-Xin \cite{Ding-Xin} then proved that any properly-immersed self-shrinker has at most Euclidean volume growth. Since self-shrinkers model singularities of mean curvature flow, Theorem \ref{thm-CM09} can be viewed as a compactness result for the space of all singularities. In particular, Theorem \ref{thm-CM09} plays an important role in understanding generic mean curvature flow in \cite{CM2012}.

Later, Ding-Xin \cite{Ding-Xin} proved a related compactness theorem for the space of self-shrinkers in $\R^3$. After that, based on the observation that a self-shrinker is a special example of an $f$-minimal surface, the third author and H. Li \cite{Li-Wei} proved a compactness theorem for the space of closed $f$-minimal surfaces in a closed three dimensional smooth metric measure space with positive Ricci curvature, generalizing the classical compactness theorem of closed minimal surfaces in a closed three manifold with positive Ricci curvature by Choi and Schoen \cite{Choi-1}. The results in \cite{Li-Wei,CM09,Ding-Xin} were later generalised by Cheng-Mejia-Zhou \cite{CMZ,CMZ2} to complete $f$-minimal surfaces in complete 3-dimensional smooth metric measure spaces. On the other hand the second author \cite{b-sharp} proved a smooth compactness result for closed embedded minimal hypersurfaces with bounded index and volume in a closed Riemannian manifold $M^{n+1}$ with positive Ricci curvature and $2\leq n\leq 6$, generalising Choi-Schoen's \cite{Choi-1} compactness theorem to higher dimensions.
\begin{thm}[\cite{b-sharp}]\label{thm-ben}
Let $(M^{n+1},g)$ be a closed Riemannian manifold with $Ric>0$ and $2\leq n\leq 6$. Given an integer $I$ and a constant $0<\Lambda<\infty$, the space of smooth closed embedded minimal hypersurfaces $\Sigma\subset M^{n+1}$ satisfying $Vol(\Sigma)\leq \Lambda$ and $\textrm{index} (\Sigma)\leq I$ is compact in the smooth topology.
\end{thm}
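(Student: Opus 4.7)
The plan is to take a sequence $\{\Sigma_k\}$ satisfying the hypotheses and extract a smoothly convergent subsequence. The volume bound $\mathrm{Vol}(\Sigma_k)\leq\Lambda$ gives, via standard varifold compactness, a subsequence (not relabelled) converging as varifolds to a stationary integral varifold $V$ with $\|V\|(M)\leq\Lambda$. The remaining work splits into (i) obtaining locally uniform second fundamental form bounds away from a finite singular set $\mathcal S\subset M$ whose size is controlled by $I$, (ii) removing the singular set, and (iii) showing that the convergence has multiplicity one.

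For step (i), I would use a point-picking/blow-up scheme. If $|A_{\Sigma_k}|$ fails to be uniformly bounded near some $p\in M$, then after a standard maximum-at-scale selection we obtain points $p_k\in\Sigma_k$ with $r_k:=|A_{\Sigma_k}|(p_k)^{-1}\to 0$ such that the rescalings $r_k^{-1}(\Sigma_k-p_k)$ converge smoothly on compact subsets of $\R^{n+1}$ to a non-flat complete properly embedded minimal hypersurface $\widetilde\Sigma\subset\R^{n+1}$ of at most Euclidean area growth (inherited from the volume bound). In the range $2\leq n\leq 6$, any such $\widetilde\Sigma$ has positive Morse index: for $n=2$ this is the Fischer-Colbrie--Schoen / do Carmo--Peng classification, and for $3\leq n\leq 6$ one appeals to the stability classification of Schoen--Simon--Yau (and its refinements), which rules out non-flat complete stable minimal hypersurfaces of Euclidean volume growth in $\R^{n+1}$. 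A compactly supported variation on $\widetilde\Sigma$ with strictly negative second variation, transplanted through the blow-up to $\Sigma_k$ for $k$ large, produces one unit of index genuinely localised near $p_k$. Iterating the argument at disjoint centres and scales, the bound $\mathrm{index}(\Sigma_k)\leq I$ forces at most $I$ concentration points, yielding a finite set $\mathcal S\subset M$ with $\#\mathcal S\leq I$ outside of which $|A_{\Sigma_k}|$ is locally uniformly bounded.

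With uniform curvature bounds on compact subsets of $M\setminus\mathcal S$, the minimal surface equation together with Arzel\`a--Ascoli and elliptic regularity gives $C^m$ subsequential convergence to a smooth embedded minimal hypersurface $\Sigma_\infty\subset M\setminus\mathcal S$ with some integer multiplicity $m_0\geq 1$. The points of $\mathcal S$ are isolated and the limit varifold has uniformly bounded density, so a standard removable singularity theorem (applicable in the range $n\leq 6$, where tangent cones of the varifold limit are forced to be hyperplanes by dimension) extends $\Sigma_\infty$ to a smooth closed embedded minimal hypersurface of $M$. To conclude, I rule out $m_0\geq 2$: locally the $\Sigma_k$ can be written as disjoint normal graphs $u_k^{(1)}<\cdots<u_k^{(m_0)}$ over $\Sigma_\infty$, and the renormalised differences $(u_k^{(2)}-u_k^{(1)})/\|u_k^{(2)}-u_k^{(1)}\|_{L^\infty}$ produce in the limit a positive Jacobi field on $\Sigma_\infty$, which forces $\Sigma_\infty$ to be stable (two-sidedness being automatic since $\mathrm{Ric}>0$ via a Frankel-type argument). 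But testing the second variation with $\varphi\equiv 1$ gives $-\int_{\Sigma_\infty}(|A|^2+\mathrm{Ric}(\nu,\nu))<0$, contradicting stability. Hence $m_0=1$ and $\Sigma_k\to\Sigma_\infty$ smoothly on all of $M$. The hardest step is (i): one must arrange the point-selection so that distinct concentration points contribute linearly independent negative directions in the second variation of $\Sigma_k$, so that the bound $\#\mathcal S\leq I$ genuinely follows from $\mathrm{index}(\Sigma_k)\leq I$, and this forces a careful iteration on mutually well-separated scales.
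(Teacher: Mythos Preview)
Your proposal is essentially correct and reaches the same destination, but the route for step~(i)---controlling the singular set---is genuinely different from the one in the paper (which follows \cite{b-sharp}; see Proposition~\ref{prop3-1} and the proof of Theorem~\ref{main-thm}). The paper never performs a blow-up or appeals to Bernstein-type theorems in $\R^{n+1}$. Instead it invokes Schoen--Simon's regularity theorem \cite[Corollary~1]{sch-sim} directly: with a volume bound, if the sequence were \emph{stable} in a ball $B_{\epsilon_0}(x_k)$ then varifold convergence would automatically upgrade to smooth graphical convergence there; hence any point of non-smooth convergence forces $\mathrm{index}(\Sigma_j\cap B_{\epsilon_0}(x_k))\geq 1$ for large $j$. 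Taking $I+1$ disjoint such balls contradicts the global index bound, so $\#\mathcal S\leq I$ without rescaling at all. Removability is handled by the same device: one shows (again by an index-counting contradiction) that the limit is stable in a punctured neighbourhood of each $x_k$, and then Schoen--Simon regularity applied to the constant sequence extends $\Sigma_\infty$ smoothly. Your blow-up approach is more quantitative---it actually produces curvature estimates away from $\mathcal S$---whereas the paper's argument is softer, sidesteps the delicate iterated point-picking you flag at the end, and packages the restriction $2\leq n\leq 6$ into a single black-box regularity input rather than the case-by-case stable-Bernstein results you invoke. The multiplicity-one step via a positive Jacobi field and the contradiction with $Ric>0$ is the same in both.

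One small gap in your step~(ii): saying ``tangent cones are forced to be hyperplanes by dimension'' is not self-contained, since Simons' theorem needs the cone to be stable, and stability of the limit is not automatic from the dimension alone. The clean fix is precisely the paper's: first argue from the index budget that $\Sigma_\infty$ is stable in a deleted neighbourhood of each $x_k$, and then apply Schoen--Simon.
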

Note that Ejiri-Micallef \cite{Eiji-M} proved that for closed minimal surface $\Sigma$ of genus $\gamma$ in a Riemannian manifold $M^m$, the area index satisfies
\begin{equation}\label{indx-Ejiri-Mic}
  ind(\Sigma)\leq C(m,M)(Area(\Sigma)+\gamma-1),
\end{equation}
where $C(m,M)$ is a constant depending  on $m,M$.  The main interest in the above result is that no assumption is made on the second fundamental form of $\Sigma\subset M^m$. Moreover, when $n=2$ the area is uniformly bounded form above by the genus and the lower Ricci-bound by an estimate of Choi-Wang \cite{CW83}. Thus Theorem \ref{thm-ben} is a natural generalisation of Choi-Schoen's compactness theorem. In this paper, we borrow the idea in \cite{b-sharp} to prove the following smooth compactness theorem for the space of $f$-minimal hypersurfaces.
\begin{thm}\label{main-thm}
Let $(M^{n+1},g,e^{-f}d\mu)$ be a complete smooth metric measure space with $2\leq n\leq 6$ and $Ric_f\geq \kappa>0$ for some positive constant $\kappa$. Given an integer $I$ and a constant $0<\Lambda<\infty$, the space $S_{\Lambda,I}$ of smooth complete embedded $f$-minimal hypersurfaces $\Sigma\subset M^{n+1}$ with
\begin{itemize}
  \item $f$-index at most $I$,
  \item $\pt\Sigma=\emptyset$,
  \item $\int_{\Sigma}e^{-f}dv\leq \Lambda$
\end{itemize}
is compact in the smooth topology. Namely, any sequence of $S_{\Lambda,I}$ has a subsequence that converges in $C^k$ topology on compact subsets to a hypersurface in $S_{\Lambda,I}$ for any $k\geq 2$.
\end{thm}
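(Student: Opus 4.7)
The overall plan is to adapt the strategy of the second author from \cite{b-sharp} to the weighted setting, viewing $f$-minimality as the vanishing of the first variation of the weighted area $\int_\Sigma e^{-f}\,dv$. Given a sequence $\{\Sigma_k\}\subset S_{\Lambda, I}$, I would first invoke a Myers-type theorem for $f$-minimal hypersurfaces in ambient spaces with $Ric_f\geq\kappa>0$ to confine each $\Sigma_k$ to a compact region of $M$. Combining with the uniform weighted volume bound, I can extract a subsequence converging as integer rectifiable varifolds (with weight $e^{-f}$) to some limit $V$. The aim is then to upgrade this to $C^k$ convergence on compact subsets of $M\setminus\mathcal{Y}$ for some finite set $\mathcal{Y}\subset M$ with $\#\mathcal{Y}\leq I$, and to extend the limit smoothly across $\mathcal{Y}$.

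The core of the argument is a curvature estimate away from a finite set. I would argue by contradiction: suppose that at $I+1$ distinct points $q_1,\ldots,q_{I+1}$ there exist $x^j_k\to q_j$ with $|A_{\Sigma_k}|(x^j_k)\to\infty$. After a standard point-picking refinement, set $r^j_k=|A_{\Sigma_k}|(x^j_k)^{-1}\to 0$ and rescale $\widetilde{\Sigma}^j_k:=(r^j_k)^{-1}(\Sigma_k-x^j_k)$ in normal coordinates at $q_j$. The $f$-minimal equation $H=\langle\overline{\nabla}f,\nu\rangle$ rescales to $H=r^j_k\langle\overline{\nabla}f,\nu\rangle\to 0$, so the limit $\widetilde{\Sigma}^j_\infty$ is a complete, non-flat, properly embedded minimal hypersurface in $\R^{n+1}$ with $|A|(0)=1$ and at most Euclidean volume growth. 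For $2\leq n\leq 6$, a Bernstein-type result (together with Schoen--Simon regularity) implies such a hypersurface has strictly positive Morse index, so I can pick a compactly supported $\varphi_j$ on $\widetilde{\Sigma}^j_\infty$ with
\[
\int_{\widetilde{\Sigma}^j_\infty}\bigl(|\nabla\varphi_j|^2-|A|^2\varphi_j^2\bigr)\,dv<0.
\]
Transplanting $\varphi_j$ back to a neighbourhood of $q_j$ in $\Sigma_k$ via the inverse rescaling yields $I+1$ pairwise disjointly supported test functions $\psi^j_k$ on $\Sigma_k$; a careful computation shows that the lower-order contributions of $\overline{\nabla}f$ and $Ric_f(\nu,\nu)$ are subordinate at scale $r^j_k$, so each $\psi^j_k$ is a negative direction for the $f$-Jacobi operator $L_f=\Delta_f+|A|^2+Ric_f(\nu,\nu)$ on $(\Sigma_k, e^{-f}dv)$ for $k$ large, contradicting $\mathrm{index}_f(\Sigma_k)\leq I$.

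Once the uniform $|A|$-bound holds on compact subsets of $M\setminus\mathcal{Y}$, standard $C^{k,\alpha}$ elliptic estimates for the quasilinear $f$-minimal equation produce a subsequence converging smoothly to an $f$-minimal limit $\Sigma_\infty$ on $M\setminus\mathcal{Y}$, a priori with integer multiplicity $m\geq 1$. Arguing as in \cite{b-sharp,Choi-1} and using $Ric_f\geq\kappa>0$, the multiplicity must equal one: if $m\geq 2$, two nearby sheets joined smoothly via the first eigenfunction of $L_f$ on the limit allow one to build an arbitrarily localised negative direction for $L_f$, again violating the $f$-index bound. Finally, a standard removable-singularity argument, using the uniform weighted area bound together with the smooth $C^2$ structure on $\Sigma_\infty\setminus\mathcal{Y}$ and the regular tangent plane supplied by the $n\leq 6$ blow-up analysis, extends $\Sigma_\infty$ smoothly across each point of $\mathcal{Y}$; lower semicontinuity of $f$-index and of weighted area under smooth convergence then gives $\Sigma_\infty\in S_{\Lambda,I}$.

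The main obstacle I anticipate lies in the blow-up step: rigorously controlling the weighted quadratic form under rescaling so that the $f$-dependent terms become genuinely subordinate to the principal part, and simultaneously producing $I+1$ \emph{pairwise disjointly} supported $L_f$-negative test functions at the full set of concentration points without one eating into another. The dimensional restriction $2\leq n\leq 6$ will be essential both in the Schoen--Simon regularity used to exclude codimension-one singularities of the varifold limit, and in the Bernstein-type rigidity forcing every non-flat blow-up to be unstable.
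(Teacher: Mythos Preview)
Your overall strategy is close to the paper's, but there are two genuine gaps.

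First, the Myers-type confinement you invoke is false. The ambient $(M,g)$ need not be compact (the Gaussian soliton $\mathbb{R}^{n+1}$ with $f=|x|^2/4$ has $Ric_f=\tfrac12$), and $f$-minimal hypersurfaces with finite weighted volume need not lie in any compact region: the self-shrinking cylinders $\mathbb{S}^k(\sqrt{2k})\times\mathbb{R}^{n-k}$ have finite weighted volume and finite $f$-index but are unbounded. This is not fatal, since the conclusion only asserts convergence on compact subsets; on any compact $K\subset M$ the weighted bound gives $\int_{\Sigma_k\cap K}dv\leq (\min_K e^{-f})^{-1}\Lambda$, which suffices for local varifold compactness and a diagonal argument. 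The paper does exactly this, but first passes to the conformal metric $\tilde g=e^{-2f/n}g$, under which each $\Sigma_k$ becomes genuinely \emph{minimal} with $\tilde g$-area equal to the weighted area and $\tilde g$-index equal to the $f$-index; the local singular compactness of \cite{b-sharp} then applies verbatim on small $\tilde g$-balls. Your direct blow-up route is a legitimate alternative to this conformal trick, but the latter avoids re-proving Schoen--Simon regularity and the curvature estimates in the weighted setting.

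Second, your multiplicity-one argument does not work as written. A localised negative direction for $L_f$ on the \emph{limit} says nothing about the index bound on the approximating $\Sigma_k$, and in any case the limit is already expected to be $f$-unstable when $Ric_f>0$, so producing one more negative direction there is no contradiction. The correct mechanism (used in the paper, following \cite{CM09,b-sharp}) runs in the opposite direction: if the multiplicity is $m\geq 2$, the normalised height difference between adjacent sheets converges to a \emph{positive} solution of $L_f u=0$ on $\Sigma$, which forces $\Sigma$ to be $f$-\emph{stable}; this contradicts \cite[Theorem~3]{CMZ}, which rules out complete two-sided $f$-stable $f$-minimal hypersurfaces with finite weighted volume under $Ric_f\geq\kappa>0$. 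This step also requires $\Sigma$ to be two-sided, which the paper arranges by first passing to the (finite, since $Ric_f>0$) universal cover of $M$ and invoking \cite[Proposition~8]{CMZ}; your proposal does not address orientability at all.
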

Our Theorem \ref{main-thm} generalises  Theorem \ref{thm-ben}, as when $f=const$, an $f$-minimal hypersurface is just a minimal hypersurface with $Ric_f=Ric$. Note that an $f$-minimal hypersurface $\Sigma$ in $(M^{n+1},g,e^{-f}d\mu)$ is also minimal in $(M^{n+1}, \tilde{g})$, where $\tilde{g}=e^{-\frac 2nf}g$ is a conformal metric of $g$, see \S \ref{conf-chang}. However, Theorem \ref{main-thm} cannot follow from Theorem \ref{thm-ben} directly by a conformal transformation, because the condition $Ric_f\geq \kappa>0$ cannot guarantee the Ricci tensor of the conformal metric $\tilde{g}$ has a positive lower bound $\widetilde{Ric}\geq c>0$.  Moreover, the Riemannian manifold $(M,g)$ with $Ric\geq \kappa>0$ must be compact by Myers' theorem, while this is not true for a smooth metric measure space $(M^{n+1},g,e^{-f}d\mu)$ with $Ric_f\geq \kappa>0$. In Theorem \ref{main-thm}, the $f$-index of an $f$-minimal hypersurface is defined as the dimension of the subspace such that the second variation of the weighted volume is negative, see \S \ref{sec:vari} for details. The proof of Theorem \ref{main-thm} will be given in \S \ref{sec:proof}.
Since a self-shrinker is an $f$-minimal hypersurface in the Gaussian soliton which has constant $Ric_f=\frac 12$, Theorem \ref{main-thm} has the following immediate corollary, which can be viewed as a generalisation of Theorem \ref{thm-CM09} to higher dimensions.
\begin{corr}\label{main-corr}
For $2\leq n\leq 6$, given an integer $I$ and a constant $0<\Lambda<\infty$, the space of smooth complete embedded self-shrinkers $\Sigma\subset \R^{n+1}$ with
\begin{itemize}
  \item $f$-index at most $I$,
  \item $\pt\Sigma=\emptyset$,
  \item $Vol(B_R(x_0)\cap\Sigma)\leq \Lambda R^n$ for all $x_0\in\R^{n+1}$ and all $R>0$
\end{itemize}
is compact in smooth topology.
\end{corr}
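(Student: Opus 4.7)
The plan is to deduce Corollary \ref{main-corr} as a direct application of Theorem \ref{main-thm} to the Gaussian soliton, with the only real work being to convert the Euclidean volume growth hypothesis into a uniform bound on the weighted volume.

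First I would record that the Gaussian soliton $(\R^{n+1},g_0,e^{-|x|^2/4}d\mu)$, with $f(x)=|x|^2/4$, satisfies $Ric_f=\frac12 g_0$, so the positive lower bound on the Bakry–\'Emery curvature holds with $\kappa=1/2$. As noted below \eqref{self-shr}, a hypersurface $\Sigma\subset\R^{n+1}$ is a self-shrinker precisely when its $f$-mean curvature vanishes, so the family described in the corollary is a subfamily of the $f$-minimal hypersurfaces in this smooth metric measure space. The $f$-index and the no-boundary conditions transfer verbatim from the corollary to Theorem \ref{main-thm}.

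The only assumption to verify is the weighted volume bound. Given a complete embedded $\Sigma\subset \R^{n+1}$ with $Vol(B_R(0)\cap\Sigma)\le \Lambda R^n$ for every $R>0$, I would decompose $\Sigma$ into Euclidean dyadic annuli $A_k:=\Sigma\cap (B_{k+1}(0)\setminus B_k(0))$, $k\ge 0$, on which $e^{-|x|^2/4}\le e^{-k^2/4}$, and estimate
\begin{equation*}
\int_\Sigma e^{-f}\,dv \;=\; \sum_{k\ge 0}\int_{A_k} e^{-|x|^2/4}\,dv \;\le\; \sum_{k\ge 0} e^{-k^2/4}\,\Lambda(k+1)^n \;=:\;\widetilde{\Lambda}(n,\Lambda) \;<\;\infty.
\end{equation*}
Thus every member of the family in Corollary \ref{main-corr} satisfies $\int_\Sigma e^{-f}dv\le \widetilde{\Lambda}$ with $\widetilde{\Lambda}$ depending only on $n$ and $\Lambda$.

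With these three observations in place, Theorem \ref{main-thm} applied to $(\R^{n+1},g_0,e^{-|x|^2/4}d\mu)$ with the index bound $I$ and weighted volume bound $\widetilde{\Lambda}$ yields a subsequential $C^k_{\mathrm{loc}}$ limit $\Sigma_\infty$ that is an embedded $f$-minimal hypersurface, i.e.\ a self-shrinker. The limit automatically inherits $\int_{\Sigma_\infty}e^{-f}dv\le\widetilde\Lambda$ from lower semicontinuity under smooth local convergence, but one still has to check that $\Sigma_\infty$ belongs to the class of the corollary, meaning the $\Lambda R^n$ Euclidean volume growth is preserved in the limit; this follows directly from the pointwise smooth convergence on compact sets, since $B_R(x_0)\cap \Sigma_\infty$ is a smooth limit (with multiplicity one, by embeddedness in the conclusion of Theorem \ref{main-thm}) of $B_R(x_0)\cap \Sigma_i$, giving $Vol(B_R(x_0)\cap \Sigma_\infty)\le \liminf_i Vol(B_R(x_0)\cap \Sigma_i)\le \Lambda R^n$. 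The main (mild) step is thus the annular weighted-volume estimate above; everything else is a transcription of Theorem \ref{main-thm} to the Gaussian setting.
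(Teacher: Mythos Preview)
Your proposal is correct and follows the same route as the paper: verify that the Gaussian soliton has $Ric_f=\tfrac12>0$, convert the Euclidean volume growth hypothesis into a uniform weighted-volume bound $\int_\Sigma e^{-|x|^2/4}\,dv\le\widetilde\Lambda$, and invoke Theorem~\ref{main-thm}. The paper states the weighted-volume implication without writing out the estimate; your dyadic annular computation is a clean way to make it explicit, and your additional verification that the limit inherits the $\Lambda R^n$ growth (via multiplicity-one smooth local convergence) fills in a point the paper leaves implicit.
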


\begin{rem}
The relationship between Corollary \ref{main-corr} and Theorem \ref{thm-CM09} is not well understood presently, in the sense that it is not clear which is stronger than the other. However, we expect there to be a more explicit relationship between the index, genus and number of ends for self-shrinkers (or more generally $f$-minimal surfaces). We mention some partial results below for closed surfaces which follow easily from the estimates available for minimal surfaces in Riemannian manifolds - the relationship in this case is comparatively well understood. 
\end{rem}

Note that the third condition in Corollary \ref{main-corr} implies that the weighted volume $\int_{\Sigma}e^{-\frac{|x|^2}4}dv\leq \tilde{\Lambda}$ for some positive constant $\tilde{\Lambda}$. Then the conclusion of Corollary \ref{main-corr} follows directly from Theorem \ref{main-thm}.

In particular for closed self shrinkers we have the following which follows easily from the estimates for minimal surfaces: 
\begin{prop}\label{mainprop-indexup-ss1}
Let $\Sigma^n \subset\R^{n+1}$ be a closed smooth embedded self-shrinker satisfying $|x|\leq c_1$, i.e., $\Sigma$ is contained in an Euclidean ball of radius $c_1$. Then the $f$-index of $\Sigma$ satisfies
\begin{equation}\label{indx-upperbd-selfs1}
  ind_f(\Sigma)\leq \twopartdef{C(n,c_1)\int_{\Sigma}(|A|^2+\frac 12)^{\frac n2}e^{-\frac{|x|^2}4}dv}{n\geq 3}{C(c_1)(\int_{\Sigma}e^{-\f{|x|^2}{4}}dv+\gamma-1)}{n=2,}
\end{equation}
where $C(n,c_1)$ is a uniform constant depending on $n,c_1$.
\end{prop}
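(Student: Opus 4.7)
The plan is to exploit the conformal change $\tilde{g} = e^{-2f/n}g_0$ on $\R^{n+1}$ (with $f = |x|^2/4$) recalled in \S\ref{conf-chang}: $\Sigma$ is $f$-minimal in the Gaussian soliton if and only if $\Sigma$ is minimal in $(\R^{n+1}, \tilde g)$, and the two second-variation operators are conformally related so that $ind_f(\Sigma) = ind(\Sigma, \tilde g)$. Since $|x| \leq c_1$ on $\Sigma$, the conformal factor $e^{-|x|^2/(2n)}$ and all its derivatives are bounded on $B_{c_1}(0)$ by constants depending only on $n, c_1$, so $(B_{c_1}(0), \tilde g)$ has curvature, injectivity radius and Sobolev constant controlled by $n$ and $c_1$. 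A direct check also gives $\text{Area}_{\tilde g}(\Sigma) = \int_\Sigma e^{-f}\,dv$, i.e.\ the weighted area coincides with the area in $\tilde g$.

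For $n = 2$, the plan is simply to apply Ejiri--Micallef's estimate \eqref{indx-Ejiri-Mic} to $\Sigma$ as a closed minimal surface of genus $\gamma$ in $(B_{c_1+1}(0), \tilde g)$. With the ambient geometry controlled by $c_1$, this produces
\[
ind(\Sigma, \tilde g) \leq C(c_1)\bigl(\text{Area}_{\tilde g}(\Sigma) + \gamma - 1\bigr) = C(c_1)\Bigl(\int_\Sigma e^{-|x|^2/4}\,dv + \gamma - 1\Bigr),
\]
and combining with $ind_f(\Sigma) = ind(\Sigma, \tilde g)$ finishes the $n = 2$ case of \eqref{indx-upperbd-selfs1}.

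For $n \geq 3$, the plan is to invoke the Cheng--Tysk / Li--Yau type index bound for a minimal hypersurface in a Riemannian manifold of bounded geometry,
\[
ind(\Sigma, \tilde g) \leq C(n, c_1) \int_\Sigma \bigl(|\tilde A|_{\tilde g}^2 + \bigl|\overline{Ric}_{\tilde g}(\tilde \nu, \tilde \nu)\bigr|\bigr)^{n/2}\,d\tilde v,
\]
and then convert everything back to original quantities. The standard conformal change formula $\tilde h_{ij} = e^{-f/n}\bigl(h_{ij} + \nu(-f/n)\, g_{ij}\bigr)$, together with the self-shrinker identity $\nu(f) = \langle x, \nu\rangle/2 = H$, gives $|\tilde A|_{\tilde g}^2 = e^{2f/n}\bigl(|A|^2 - H^2/n\bigr)$ and $d\tilde v = e^{-f}\,dv$. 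Since $|x|\le c_1$ controls $e^{2f/n}$ and $|\overline{Ric}_{\tilde g}|$ by constants depending only on $n, c_1$, and $e^{-f} \geq e^{-c_1^2/4}$ on $\Sigma$, the integrand is dominated by $C(n, c_1)(|A|^2 + 1/2)^{n/2}e^{-f}$, giving the required bound.

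The only mildly delicate point is ensuring that the Cheng--Tysk estimate is available with constants depending only on $n$ and $c_1$; this is standard because $\Sigma$ lies in the compact region $(B_{c_1}(0), \tilde g)$ of uniformly controlled geometry, so the usual derivation via test functions and the Sobolev inequality applies directly. Everything else is the routine conformal bookkeeping outlined above.
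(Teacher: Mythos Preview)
Your proposal is correct and follows essentially the same route as the paper: pass to the conformal metric $\tilde g=e^{-2f/n}g_0$ on the compact ball $\overline{B_{c_1}(0)}$, apply Ejiri--Micallef for $n=2$ and Cheng--Tysk for $n\geq 3$ to the minimal hypersurface $\Sigma\subset(\overline{B_{c_1}(0)},\tilde g)$, and then convert the resulting bound back using the conformal identities $d\tilde v=e^{-f}dv$, $|\tilde A|_{\tilde g}^2=e^{2f/n}(|A|^2-H^2/n)$ and the formula \eqref{q-confor} for $|\tilde A|_{\tilde g}^2+\widetilde{Ric}(\tilde\nu,\tilde\nu)$. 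The paper packages this as the more general Proposition~\ref{1.8} and then specialises to self-shrinkers, but the argument is the same.
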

\begin{rem}
Cao-Li \cite{Cao-Li} proved that any compact self-shrinker $\Sigma^n \subset\R^{n+1}$ satisfying $H^2=\frac 14|x^{\bot}|^2\leq n/2$ is the sphere $\SS^n(\sqrt{2n})$, see also \cite{C-Espinar,Li-Wei-self-shr,Pigola-rimo} for related results.
\end{rem}

We moreover have the analogous result for general closed $f$-minimal surfaces in smooth metric measure spaces, from which Proposition \ref{indx-upperbd-selfs1} follows; 
\begin{prop}\label{1.8}
Let $\Sigma^n$ be a smooth closed embedded $f$-minimal hypersurface in $(M^{n+1}, g, e^{-f}d\mu)$. Suppose that $\Sigma\subset\subset K$ where $K$ is compact with open interior, and $\partial K$ is smooth. Then the $f$-index of $\Sigma$ satisfies 
\begin{equation}\label{indx-upperbd-selfs2}
  ind_f(\Sigma)\leq \twopartdef{C(n,K,M,f)\int_{\Sigma}\max\{1,|A|^2+Ric_f(\nu,\nu)\}^{\frac n2}e^{-f}dv}{n\geq 3}{C(K,M,f)(\int_{\Sigma}e^{-f}dv+\gamma-1)}{n=2,}
\end{equation}
where $C(n,K,M,f)$ is a uniform constant depending on $n,K,f$ and $M$. 
\end{prop}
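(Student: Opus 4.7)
The plan is to reduce the statement to known index estimates for minimal hypersurfaces via a conformal change of metric. Set $\tilde{g} = e^{-2f/n}g$ on $M$. As indicated in the introduction and verified in \S \ref{conf-chang}, an $f$-minimal hypersurface $\Sigma\subset (M,g,e^{-f}d\mu)$ is precisely a minimal hypersurface in $(M,\tilde{g})$. Moreover the induced Riemannian volume satisfies $d\tilde{v}_\Sigma = e^{-f}dv_\Sigma$ (since $2/n \cdot n = 2$ gives the correct weight), and a direct computation identifies the $f$-Jacobi operator $L_f = \Delta_f + |A|^2 + Ric_f(\nu,\nu)$ on $(\Sigma, e^{-f}dv)$ with the standard Jacobi operator for $\Sigma\subset (M,\tilde{g})$ up to conjugation by a positive multiplier. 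In particular $ind_f(\Sigma) = ind(\Sigma,\tilde{g})$, so it suffices to bound the classical Morse index of $\Sigma$ as a minimal hypersurface in $(M,\tilde g)$.

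For $n=2$, I would apply the Ejiri-Micallef bound \eqref{indx-Ejiri-Mic} to $\Sigma$ in $(M,\tilde{g})$. The constant appearing there depends on ambient geometric data; since $\Sigma\subset\subset K$ with $K$ compact and $f$ smooth, the geometry of $(K,\tilde g)$ is controlled and the constant may be absorbed into one of the form $C(K,M,f)$. Combined with $Area_{\tilde{g}}(\Sigma) = \int_\Sigma e^{-f}dv$, this yields the $n=2$ case of \eqref{indx-upperbd-selfs2}.

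For $n\geq 3$ the strategy is a Cheng-Tysk type estimate: the number of negative eigenvalues of the Jacobi operator on a minimal hypersurface $(\Sigma,\tilde{g})$ is bounded by
\[
\tilde{C}\int_\Sigma \bigl(|\tilde{A}|^2_{\tilde g} + \widetilde{Ric}(\tilde{\nu},\tilde{\nu})\bigr)_+^{n/2}\, d\tilde{v},
\]
where the constant $\tilde C$ depends only on a Hoffman-Spruck-Michael-Simon Sobolev inequality on $\Sigma$. Since $\Sigma\subset\subset K$ with $K$ compact and $\tilde g$ smooth on $M$, the sectional curvature and injectivity radius of $\tilde g$ on $K$ are bounded, so such a Sobolev inequality holds with a constant depending only on $K,M,f|_K$. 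The remaining step is to rewrite the integrand in the original data: the standard conformal formulae under $\tilde{g}=e^{-2f/n}g$ give a pointwise estimate
\[
|\tilde{A}|^2_{\tilde g} + \widetilde{Ric}(\tilde{\nu},\tilde{\nu}) \leq C(K,M,f)\,\bigl(|A|^2 + Ric_f(\nu,\nu) + 1\bigr),
\]
the correction coming from uniform bounds on $\overline{\nabla}f$ and $\overline{\nabla}^2 f$ on $K$. Combining with $d\tilde v_\Sigma = e^{-f}dv_\Sigma$ and the elementary inequality $(a+b)^{n/2}\leq C(n)(a^{n/2}_+ + b^{n/2}_+)$, the integrand is controlled by $\max\{1,|A|^2+Ric_f(\nu,\nu)\}^{n/2}e^{-f}$, giving \eqref{indx-upperbd-selfs2}.

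The main technical point is the careful conformal translation of the curvature terms with explicit uniform constants, but this is routine once one invokes the compactness of $K$; no genuinely new analytic estimate beyond bounded geometry of $\tilde g$ on $K$ is required. Proposition \ref{mainprop-indexup-ss1} then follows immediately by specialising to the Gaussian soliton: the hypothesis $|x|\leq c_1$ provides a compact region $K$ on which $f(x)=|x|^2/4$ has derivatives controlled by $c_1$, and one has $Ric_f(\nu,\nu)=\tfrac12$ throughout.
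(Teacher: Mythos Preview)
Your proposal is correct and follows essentially the same route as the paper: pass to the conformal metric $\tilde g=e^{-2f/n}g$ on the compact region $K$, use that $ind_f(\Sigma)$ equals the ordinary index of $\Sigma\subset(K,\tilde g)$, apply Ejiri--Micallef for $n=2$ and Cheng--Tysk for $n\geq 3$, and then translate the integrand back via the conformal identity \eqref{q-confor}. The only cosmetic difference is that the paper packages the ambient constant by citing a Nash embedding of $(K,\tilde g)$ into Euclidean space (Theorem~\ref{thm-min} and the remark following it), whereas you phrase it in terms of bounded geometry and a Sobolev constant on $K$; both justifications are valid and yield the same dependence $C(n,K,M,f)$.
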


\begin{rem}
We note that since we are working in a compact region of $M$, the quantities on the right hand side of \eqref{indx-upperbd-selfs1}, \eqref{indx-upperbd-selfs2} and   are equivalent (up to a constant depending only on $K,c_1$ and $f$) to the $(M,g)$-area $\int_\Sigma 1 dv$, plus the usual (un-weighted) total curvature $\int_\Sigma |A|^n d v$.  
\end{rem}

From Corollary 8 in \cite{Li-Wei}, we know that for a closed embedded $f$-minimal surface $\Sigma$ in a closed smooth metric measure space $(M^{3},g,e^{-f}d\mu)$ with $Ric_f\geq \kappa>0$,  the weighted area of $\Sigma$ satisfies
\begin{equation}\label{weighted-areabound}
\int_{\Sigma}e^{-f}dv\leq \frac {16\pi}{\kappa}(\gamma+1)e^{-\min_\Sigma f}.
\end{equation}
Combining \eqref{indx-upperbd-selfs2}, \eqref{weighted-areabound} and Theorem \ref{main-thm}, we recover Li-Wei's \cite{Li-Wei} theorem.
\begin{thm}[\cite{Li-Wei}]\label{li-wei}
Let $(M^{3},g,e^{-f}d\mu)$ be a closed (i.e. compact with empty boundary) smooth metric measure space with $Ric_f\geq \kappa>0$. Then the space of closed embedded $f$-minimal surfaces of fixed topological type in $(M^{3},g,e^{-f}d\mu)$ is compact in the smooth topology.
\end{thm}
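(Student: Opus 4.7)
The plan is to deduce the theorem from the three ingredients that the excerpt assembles just before the statement: the weighted area estimate \eqref{weighted-areabound}, the index upper bound from Proposition \ref{1.8}, and the smooth compactness Theorem \ref{main-thm}. Since we fix the topological type, the genus $\gamma$ of each $\Sigma$ in the family is constant. First I would invoke \eqref{weighted-areabound} to get $\int_\Sigma e^{-f}dv\leq \tfrac{16\pi}{\kappa}(\gamma+1)e^{-\min_M f}$, where the replacement of $\min_\Sigma f$ by $\min_M f$ is legitimate because $M$ is closed and $f$ is smooth, so this bound is uniform across the entire family.

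Next I would apply the $n=2$ case of Proposition \ref{1.8} with the choice $K=M$ (permissible since $M$ is closed and without boundary, so $\Sigma\subset\subset K$ trivially). This yields
\[
  ind_f(\Sigma)\leq C(M,f)\Bigl(\int_\Sigma e^{-f}dv+\gamma-1\Bigr),
\]
and combining with the weighted area bound from the previous step produces a uniform integer bound $ind_f(\Sigma)\leq I=I(\gamma,\kappa,M,f)$.

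At this point every $\Sigma$ in the family lies in $S_{\Lambda,I}$ for the $\Lambda$ and $I$ extracted above, so Theorem \ref{main-thm} applies directly: any sequence has a subsequence converging smoothly on compact subsets to some $f$-minimal hypersurface $\Sigma_\infty\in S_{\Lambda,I}$. Since $M$ is compact and the surfaces are closed, the convergence is global and $\Sigma_\infty$ is also closed.

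The only point that requires a brief extra comment is that the limit still has the prescribed topological type. For smooth multiplicity-one convergence of closed embedded surfaces in a closed $3$-manifold the diffeomorphism type is preserved, since for large $i$ the surfaces $\Sigma_i$ are smooth normal graphs over $\Sigma_\infty$. Multiplicity one follows because both the weighted area and the genus are preserved under smooth convergence of embedded surfaces and the weighted area bound \eqref{weighted-areabound} is sharp with respect to $\gamma$; higher multiplicity would force the limit to have smaller genus and violate the compatibility between the area bound and the index/genus estimate. Hence the limit lies in the same topological class, completing the argument. The main subtlety, as just indicated, is verifying the no-loss-of-topology statement cleanly; everything else is an immediate assembly of the previously-stated results.
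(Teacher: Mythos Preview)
Your assembly of \eqref{weighted-areabound}, Proposition \ref{1.8} (with $K=M$), and Theorem \ref{main-thm} is exactly the route the paper takes; the paper simply states that combining these three ingredients recovers the result.

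Your final paragraph, however, contains a misstep. Multiplicity one is not something you need to argue separately: it is already part of the conclusion of Theorem \ref{main-thm}, whose proof rules out higher multiplicity by showing that it would produce a positive Jacobi field and hence an $f$-stable limit, impossible under $Ric_f\geq\kappa>0$. The justification you offer instead --- that the bound \eqref{weighted-areabound} is ``sharp with respect to $\gamma$'' and that higher multiplicity would violate some ``compatibility between the area bound and the index/genus estimate'' --- is not correct: the constant $\tfrac{16\pi}{\kappa}(\gamma+1)e^{-\min f}$ is not sharp in general, and there is no mechanism by which the area and index bounds alone force multiplicity one. Once you simply cite the smooth (hence multiplicity-one) convergence from Theorem \ref{main-thm}, your observation that large-$i$ surfaces are normal graphs over $\Sigma_\infty$ correctly gives preservation of diffeomorphism type, a point the paper leaves implicit.
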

Further to the above we know that if $\Sigma^2$ is a closed embedded $f$-minimal surface in a complete non-compact smooth metric
measure space $(M^3,g,e^{−f}d\mu)$ with $Ric_f \geq k>0$, and $\Sigma\subset D$ where $D$ is some compact domain with $g$-convex boundary, then by \cite[Proposition 8]{CMZ2} we have that \eqref{weighted-areabound} is satisfied. Combining this with \eqref{indx-upperbd-selfs2}, we consider the compactness for $f$-minimal surfaces in a complete non-compact manifold and recover Cheng-Meija-Zhou's \cite{CMZ2} theorem.
\begin{thm}[\cite{CMZ2}]\label{cmz}
Let $(M^3,g,e^{−f}d\mu)$ be a complete non-compact smooth metric
measure space with $Ric_f \geq k$, where $k$ is a positive constant. Assume that
$M^3$ admits an exhaustion by bounded domains with convex boundary. Then
the space, denoted by $S_{D,g}$, of closed embedded $f$-minimal surfaces in $M^3$ with
genus at most $g$ and diameter at most $D$ is compact in the $C^m$ topology, for
any $m\geq2$.
\end{thm}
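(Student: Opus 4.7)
The plan is to reduce Theorem \ref{cmz} to Theorem \ref{main-thm} by producing, for any sequence $\{\Sigma_i\}\subset S_{D,g}$, uniform bounds on both the weighted area and the $f$-index. First, I would localise: fix basepoints $p_i\in\Sigma_i$; by the diameter bound $\Sigma_i\subset B_D(p_i)$, so after passing to a subsequence we may assume $p_i\to p_\infty\in M$ (if no subsequence of basepoints converges, there can be no $C^m$-limit on compact subsets, so the conclusion is vacuous in that case). For large $i$ every $\Sigma_i$ therefore lies in $\overline{B_{2D}(p_\infty)}$, and the exhaustion hypothesis on $M$ provides a single bounded domain $D^*\subset M$ with smooth convex boundary containing $\overline{B_{2D}(p_\infty)}$; hence $\Sigma_i\subset D^*$ for all large $i$.

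Next, I would extract the weighted-area bound. Since each $\Sigma_i$ is a closed embedded $f$-minimal surface in the compact domain $D^*$ with convex boundary, the version of \cite[Proposition 8]{CMZ2} cited in the excerpt applies and the estimate \eqref{weighted-areabound} yields
\[
\int_{\Sigma_i} e^{-f}\,dv \leq \frac{16\pi}{k}(g+1)\,e^{-\min_{\overline{D^*}} f} =: \Lambda,
\]
independent of $i$. Then Proposition \ref{1.8} in dimension $n=2$ with $K=\overline{D^*}$ furnishes the $f$-index bound
\[
ind_f(\Sigma_i) \leq C(D^*,M,f)\Bigl(\int_{\Sigma_i} e^{-f}\,dv + g - 1\Bigr) \leq C(D^*,M,f)(\Lambda + g - 1) =: I.
\]

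With these two uniform bounds in hand, and the hypotheses $n=2\in[2,6]$ and $Ric_f\geq k>0$, Theorem \ref{main-thm} applies directly to the sequence $\{\Sigma_i\}$ and produces a subsequence converging in $C^m$ on compact subsets of $M$ to a complete embedded $f$-minimal surface $\Sigma_\infty$; since $\Sigma_i\subset\overline{D^*}$, the limit satisfies $\Sigma_\infty\subset\overline{D^*}$ and is in particular closed. Standard lower semicontinuity of the genus and the diameter under smooth convergence then places $\Sigma_\infty\in S_{D,g}$. The main obstacle is the localisation step: it is precisely the \emph{convex}-exhaustion hypothesis (rather than a generic exhaustion by bounded sets) that both makes \cite[Proposition 8]{CMZ2} applicable and simultaneously confines the sequence uniformly; once this is carried out, the proof reduces to a clean chain \eqref{weighted-areabound} $\Rightarrow$ Proposition \ref{1.8} $\Rightarrow$ Theorem \ref{main-thm}.
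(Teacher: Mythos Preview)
Your chain of implications --- confinement in a convex domain, then the weighted area bound via \cite[Proposition~8]{CMZ2} and \eqref{weighted-areabound}, then the $f$-index bound via Proposition~\ref{1.8}, then Theorem~\ref{main-thm} --- is exactly the route the paper sketches.

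However, your localisation step contains a logical error. You write that ``if no subsequence of basepoints converges, there can be no $C^m$-limit on compact subsets, so the conclusion is vacuous in that case''. This is backwards: compactness of $S_{D,g}$ means precisely that \emph{every} sequence admits a convergent subsequence, so if the basepoints $p_i$ escaped to infinity in $M$ the theorem would be \emph{false}, not vacuously true. The paper's brief sketch does not spell this point out either, but it is exactly here that a genuine argument ruling out escape is required --- for instance by showing (as in \cite{CMZ2}, and as the maximum principle gives directly for self-shrinkers via $\Delta_f|x|^2 = 2n - |x|^2$, forcing $\min_{\Sigma}|x|\leq\sqrt{2n}$) that every closed embedded $f$-minimal surface in $(M,g,e^{-f}d\mu)$ with $Ric_f\geq k>0$ must intersect a fixed compact set; the diameter bound $D$ then traps the whole sequence in a single bounded region, which the convex exhaustion encloses in one domain $D^*$. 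Once confinement is properly established, the remainder of your proof --- including replacing $e^{-\min_{\Sigma_i}f}$ by $e^{-\min_{\overline{D^*}}f}$ to obtain a uniform $\Lambda$, and applying Proposition~\ref{1.8} with $K=\overline{D^*}$ --- is correct.
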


\begin{rem}
We would like to point out that Theorem \ref{main-thm} can now be thought of as an extension of Theorems \ref{li-wei} and \ref{cmz} to higher dimensions. 

\end{rem}

In the last section, we will provide information on the conformal structure of $f$-minimal surfaces with finite $f$-index in $(M^{3},g,e^{-f}d\mu)$ with nonnegative $V$, where $V$ is a quantity defined by (see \cite{Espinar})
\begin{equation}\label{V-def-1}
  V=\frac 13\left(\frac 12R_f^P+\frac 12|A|^2+\frac 18|\nabla f|^2\right),
\end{equation}
and here $R_f^P=R+2\overline{\Delta}f-|\overline{\nabla}f|^2$ is the Perelman's scalar curvature (see \cite{perel}) of $(M^{3},g,e^{-f}d\mu)$, and $\nabla$ is the Levi-Civita connection on $\Sigma$ with respect to the induced metric from $g$.
\begin{thm}\label{mainthm-fisch}
Let $\Sigma$ be a complete orientable $f$-minimal surface of finite $f$-index in $(M^{3},g,e^{-f}d\mu)$ with $V\geq 0$, then $\Sigma$ is conformally diffeomorphic to a Riemannian surface with a finite number of points removed.
\end{thm}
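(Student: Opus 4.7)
The plan is to adapt Fischer-Colbrie's classical argument for stable minimal surfaces in $3$-manifolds to the $f$-minimal setting, building on the framework of Espinar \cite{Espinar}.

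\textbf{Step 1 ($f$-stability outside a compact set).} Since the $f$-index of $\Sigma$ is finite, a standard domain-monotonicity argument for the eigenvalues of the weighted Jacobi operator $L_f := \Delta_f + |A|^2 + \overline{Ric}_f(\nu,\nu)$ produces a compact domain $K \Subset \Sigma$ such that $\Sigma\setminus K$ is $f$-stable: for every $\phi \in C_c^\infty(\Sigma\setminus K)$,
\begin{equation*}
\int_{\Sigma}\bigl(|\nabla\phi|^{2}-(|A|^{2}+\overline{Ric}_f(\nu,\nu))\phi^{2}\bigr)e^{-f}\,dv \;\ge\; 0.
\end{equation*}

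\textbf{Step 2 (Positive Jacobi function).} I exhaust $\Sigma\setminus K$ by bounded smooth domains $\Omega_j$ and let $u_j>0$ be the principal Dirichlet eigenfunction of $-L_f$ on $\Omega_j$, with eigenvalue $\lambda_j\ge 0$ (nonnegativity follows from $f$-stability). Normalize $u_j(p_0)=1$ at a fixed basepoint $p_0$, and pass to the limit via the weighted Harnack inequality to obtain a positive smooth function $u$ on $\Sigma\setminus K$ satisfying
\begin{equation*}
\Delta_f u + (|A|^2 + \overline{Ric}_f(\nu,\nu))\,u \;\le\; 0.
\end{equation*}

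\textbf{Step 3 (Conformal metric of nonnegative Gauss curvature).} Following Espinar, I introduce the conformal metric
\begin{equation*}
\tilde g \;=\; e^{2\psi}g_{\Sigma}, \qquad \psi \;:=\; \log u - \tfrac{1}{4} f,
\end{equation*}
on $\Sigma\setminus K$, where the coefficient $\tfrac{1}{4}$ is dictated by the $\tfrac{1}{8}|\overline{\nabla} f|^2$ term in the definition \eqref{V-def-1} of $V$. Using the conformal formula $\tilde K=e^{-2\psi}(K-\Delta\psi)$, the Step 2 differential inequality to bound $\Delta\log u$ from above, the $f$-minimality relation $H=\langle\overline{\nabla} f,\nu\rangle$, and the Gauss equation together with Perelman's identity $\overline{R}_f^P=\overline{R}+2\overline{\Delta} f-|\overline{\nabla} f|^{2}$, a direct computation yields
\begin{equation*}
\tilde K \;\ge\; 3\, e^{-2\psi}\, V \;\ge\; 0.
\end{equation*}

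\textbf{Step 4 (Conclusion via Huber).} I extend $\tilde g$ smoothly across $K$ to a global Riemannian metric on $\Sigma$ in the same conformal class as $g_\Sigma$. The extended metric has $\int_\Sigma \tilde K^{-}\, d\tilde v<\infty$, since the negative part of $\tilde K$ is confined to the compact set $K$, and is complete at infinity (the decay rate of $u$ is controlled via a Harnack-chain argument so that the $\tilde g$-distance to infinity is infinite). Huber's theorem then implies that $\Sigma$, with its intrinsic conformal structure (which is unchanged under a conformal change of metric), is conformally diffeomorphic to a closed Riemann surface with a finite number of points removed.

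The main obstacle is the explicit Gauss curvature computation in Step 3: the precise coefficients $(\tfrac{1}{2},\tfrac{1}{2},\tfrac{1}{8})$ in \eqref{V-def-1} are engineered so that, after expansion, the cross-terms involving $\langle\nabla f,\nabla\log u\rangle$ and $|\nabla f|^2$ reorganize exactly into $3V$ plus a manifestly nonnegative quadratic in $\nabla\psi$. The secondary technical point is verifying completeness of $\tilde g$ near each end, which amounts to controlling the joint decay of $u$ and growth of $f$ along $\Sigma$.
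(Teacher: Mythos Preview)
Your overall strategy differs from the paper's and contains a genuine gap in Step~3 together with an inadequately justified Step~4.

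\textbf{The coefficient in Step 3 is wrong.} With $\psi=\log u-\tfrac14 f$ and your differential inequality $\Delta_f u+(|A|^2+Ric_f(\nu,\nu))u\le 0$, a direct computation using the Gauss equation, $f$-minimality $H=\langle\overline{\nabla}f,\nu\rangle$, and $R_f^P=R+2\overline{\Delta}f-|\overline{\nabla}f|^2$ gives
\[
K-\Delta\psi \;\ge\; 3V \;+\; |\nabla\log u|^2 - \langle\nabla f,\nabla\log u\rangle + \tfrac38|\nabla f|^2 \;-\; \tfrac34\,\Delta f .
\]
The term $-\tfrac34\Delta f$ does not cancel and has no sign, so your claimed bound $\tilde K\ge 3e^{-2\psi}V$ does not follow. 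If instead one takes $\psi=\log u - f$ (coefficient $-1$, not $-\tfrac14$), the $\Delta f$ term disappears and the gradient terms complete the square as $|\nabla\log u-\tfrac12\nabla f|^2+\tfrac18|\nabla f|^2\ge 0$, yielding $\tilde K\ge 3e^{-2\psi}V$ as desired.

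\textbf{Step 4 is not justified.} Even with the corrected conformal factor $\tilde g=u^2 e^{-2f}g$, completeness means $\int_\gamma u\,e^{-f}\,ds=\infty$ along every divergent curve. A Harnack-chain argument controls $u$ only multiplicatively on compact sets and says nothing about the decay of $e^{-f}$, which can go to zero since there is no hypothesis bounding $f$ on $\Sigma$. Completeness in this setting is the delicate point of Fischer-Colbrie's argument and requires the second-variation-of-arclength trick; it is not clear this trick survives the extra $e^{-f}$ factor.

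\textbf{Comparison with the paper.} The paper avoids both difficulties by first invoking Espinar's lemma (Lemma~\ref{lem-espinar}) to pass from the \emph{weighted} stability inequality to the \emph{unweighted} statement that $L=\Delta-\tfrac13 K+V$ is stable on $\Sigma\setminus C$. One then obtains (via Fischer-Colbrie's Proposition~1) a positive $u$ with $Lu=0$ outside $C$ and takes $\hat g=u^6 g$, with \emph{no $f$ in the conformal factor}. The curvature identity becomes the clean $\hat K=3u^{-6}(V+|\nabla\log u|^2)\ge 0$, and completeness follows verbatim from Fischer-Colbrie's geodesic argument. In short, the key move you are missing is to strip the weight \emph{before} running the Fischer-Colbrie machinery, rather than carrying $f$ into the conformal metric.
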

Theorem \ref{mainthm-fisch} is a generalisation of Fischer-Colbrie's \cite{fisch} result, where she proved that a complete orientable minimal surface $\Sigma$ of finite index in a three manifold $(M^3,g)$ with nonnegative scalar curvature is conformally diffeomorphic to a Riemannian surface with a finite number of points removed. We further remark that for complete self shrinkers $\Sigma^2$ in $\R^3$ we will not have $V\geq 0$ in general.

\section{$f$-minimal hypersurfaces}\label{sec:prelim}

In this section, we collect and prove some properties about $f$-minimal hypersurfaces. The readers can refer to \cite{CMZ,CMZ2,Cheng-Zhou,Espinar,Impera-Ri,Li-Wei,Liu} for more information about $f$-minimal hypersurfaces.

\subsection{Variational properties}\label{sec:vari}
Let $(M^{n+1},g,e^{-f}d\mu)$ be a complete smooth metric measure space. Suppose that $\Sigma$ is a smooth embedded hypersurface in $M^{n+1}$. For any normal variation $\Sigma_s$ of $\Sigma$ with compactly supported variation vector field $X\in T^{\bot}\Sigma$, the first variation formula for the weighted volume of $\Sigma$
\begin{equation*}
  Vol_f(\Sigma):=\int_{\Sigma}e^{-f}dv
\end{equation*}
is given by
\begin{equation}\label{1st-vari}
 \frac d{ds}\bigg|_{s=0}Vol_f(\Sigma_s)=-\int_{\Sigma}g(X,\textbf{H}_f)e^{-f}dv
\end{equation}
where $dv$ is the volume form of $\Sigma$ with respect to the metric induced from $(M,g)$. We say that $\Sigma$ is $f$-minimal if $\frac d{ds}\big|_{s=0}Vol_f(\Sigma_s)=0$ for any compactly supported normal variation $\Sigma_s$. Then from \eqref{1st-vari}, $\Sigma$ is $f$-minimal if and only if $\textbf{H}_f=0$ on $\Sigma$.

We also have the second variation formula for $Vol_f(\Sigma)$ (see e.g.,\cite{CMZ,Liu}):
\begin{align}\label{2nd-vari}
 \frac {d^2}{ds^2}\bigg|_{s=0}Vol_f(\Sigma_s)=&\int_{\Sigma}\left(|\nabla^{\bot}X|^2-(|A|^2|X|^2+Ric_f(X,X))\right)e^{-f}dv
\end{align}
where $\nabla^{\bot}$ is the connection on the normal bundle $T^{\bot}\Sigma$, $|A|^2$ is the squared norm of the second fundamental form $A$ of $\Sigma$ in $(M,g)$ and $Ric_f$ is the Bakry-\'{E}mery Ricci tensor of $(M,g,e^{-f}d\mu)$. We associate  the right hand side of \eqref{2nd-vari} to a bilinear form on the space of compactly supported normal vector field on $\Sigma$
\begin{align}\label{Qf-X}
Q_f^{\Sigma}(X,X)=&\int_{\Sigma}\left(|\nabla^{\bot}X|^2-(|A|^2|X|^2+Ric_f(X,X))\right)e^{-f}dv.
\end{align}
An $f$-minimal hypersurface $\Sigma$ is called $f$-stable if $Q_f^{\Sigma}(X,X)\geq 0$ for any compactly supported $X\in T^{\bot}\Sigma$. The $f$-index of $\Sigma$, denoted by $ind_f(\Sigma)$, is defined to be the dimension of the subspace such that $Q_f^{\Sigma}$ is strictly negative definite.

If $\Sigma$ is two-sided, then there exists a globally defined unit normal $\nu$ on $\Sigma$.  Then $f$-mean curvature vector field of $\Sigma$ in $(M,g)$ satisfies $\textbf{H}_f=-H_f\nu=0$. Set $X=\varphi\nu$ for some smooth compactly supported function $\varphi$ on $\Sigma$. Then the quadratic form \eqref{Qf-X} is equivalent to
\begin{align}\label{Qf}
Q_f^{\Sigma}(\varphi,\varphi)=&\int_{\Sigma}\left(|\nabla \varphi|^2-(|A|^2+Ric_f(\nu,\nu))\varphi^2\right)e^{-f}dv\nonumber\\
 =&-\int_{\Sigma}\varphi L_f\varphi e^{-f}dv,
\end{align}
where $\nabla$ is the Levi-Civita connection on $\Sigma$ w.r.t the metric induced from $(M,g)$ - thus in particular there are no two-sided $f$-minimal surfaces which have zero index (or are stable). The operator $L_f$ is defined by
\begin{equation}\label{def-Lf}
  L_f=\Delta_f+|A|^2+Ric_f(\nu,\nu),
\end{equation}
where $\Delta_f=\Delta-\nabla f\cdot\nabla$ is the $f$-Laplacian on $\Sigma$, which is self-adjoint in the weighted $L^2$ sense, i.e., for any two $u,v\in C_c^{\infty}(\Sigma)$,
\begin{equation}\label{self-aj}
  \int_{\Sigma}u\Delta_fve^{-f}dv=-\int_{\Sigma}\langle\nabla u,\nabla v\rangle e^{-f}dv.
\end{equation}
Then $L_f$ is also self-adjoint in the weighted $L^2$ sense. For any compact domain $\Omega\subset\Sigma$, consider the following Dirichlet eigenvalue problem
\begin{equation}\label{dirichlet-pb}
-L_fu=\lambda u,\quad u|_{\pt\Omega}=0,\quad u\in C^{\infty}(\Omega).
\end{equation}
The standard spectrum theory implies that the set of eigenvalues of the problem \eqref{dirichlet-pb} is a discrete increasing sequence $\lambda_1(\Omega)<\lambda_2(\Omega)\leq\cdots\ra\infty$. There is some number $k$, counted with multiplicity, such that $\lambda_1,\cdots,\lambda_k<0\leq \lambda_{k+1}$. The Morse index of $L_f$ on $\Omega$ is defined to be the number $k$ so that $\lambda_{k+1}$ is the first nonnegative eigenvalue of $L_f$ on $\Omega$, which we denote by $ind_f(\Omega)$. By the minimax characterisation of eigenvalues we have 
\begin{equation*}
  ind_f(\Omega)\leq ind_f(\Omega'),\textrm{ if }\Omega\subset \Omega'\subset \Sigma.
\end{equation*}
As in the complete two-sided minimal hypersurface case (see \cite{fisch}), the $f$-index of complete two-sided $f$-minimal hypersurface $\Sigma$ is equal to the supremum over compact domains of $\Sigma$ of the $f$-index of compact domain $\Omega\subset\Sigma$, i.e., $ind_f(\Sigma)=\sup\limits_{\Omega\subset\Sigma}ind_f(\Omega)$.

\subsection{An $f$-minimal surface as a minimal surface in a conformally changed metric}\label{conf-chang}
The $f$-minimal hypersurface can be viewed as a minimal hypersurface under a conformal metric. Define a conformal metric $\tilde{g}=e^{-\frac 2{n}f}g$ on $M$. Then the volume of $\Sigma$ with respect to the induced metric from $\tilde{g}$ is
\begin{equation}\label{f-area}
  \widetilde{Vol}(\Sigma)=\int_{\Sigma}d\tilde{v}=\int_{\Sigma}e^{-f}dv=Vol_f(\Sigma).
\end{equation}
The first variation formula for $\widetilde{Vol}$ is given by
\begin{align}\label{1st-vari-2}
  \frac d{ds}\bigg|_{s=0}\widetilde{Vol}(\Sigma_s)=&-\int_{\Sigma}\tilde{g}(X,\tilde{\textbf{H}})d\tilde{v}
  =-\int_{\Sigma}e^{-\frac 2{n}f}g(X,\tilde{\textbf{H}})e^{-f}dv,
\end{align}
where $\tilde{\textbf{H}}$ is the mean curvature vector field of $\Sigma$ in $(M,\tilde{g})$. Comparing \eqref{1st-vari} and \eqref{1st-vari-2}, and noting \eqref{f-area}, we conclude that $\tilde{\textbf{H}}=e^{\frac 2{n}f}\textbf{H}_f$ and $\Sigma$ is $f$-minimal in $(M,g)$ if and only if $\Sigma$ is minimal in $(M,\tilde{g})$.

When $\Sigma$ is two-sided, there exists a globally defined unit normal $\tilde{\nu}$ on $\Sigma$. Then $\tilde{\nu}=e^{\frac f{n}}\nu$, where $\nu$ is the unit normal of $\Sigma$ in $(M,{g})$. The mean curvature vector field of $\Sigma$ in $(M,\tilde{g})$ satisfies $\tilde{\textbf{H}}=-\tilde{H}\tilde{\nu}=-\tilde{H}e^{\frac f{n}}\nu$. Thus, we have
\begin{equation}\label{mean-cur}
  \tilde{H}=e^{\frac f{n}}H_f.
\end{equation}
The variation vector field $X$ can be rewritten as $X=\varphi\nu=e^{-\frac fn}\varphi\tilde{\nu}$. The second variation formula for $\widetilde{Vol}(\Sigma)$ is given by
\begin{align}\label{2st-vari-2}
  \frac {d^2}{ds^2}\bigg|_{s=0}\widetilde{Vol}(\Sigma_s)=&-\int_{\Sigma}\tilde{g}(X,\tilde{L}(X))d\tilde{v}
  =-\int_{\Sigma}e^{-\frac f{n}}\varphi \tilde{L}(e^{-\frac f{n}}\varphi)e^{-f}dv,
\end{align}
where $\tilde{L}$ is the Jacobi operator of $\Sigma\subset (M,\tilde{g})$:
\begin{equation}\label{td-L}
  \tilde{L}=\Delta_{\tilde{g}}+|\tilde{A}|_{\tilde{g}}^2+\widetilde{Ric}(\tilde{\nu},\tilde{\nu}).
\end{equation}

Since $\widetilde{Vol}(\Sigma)=Vol_f(\Sigma)$ and the normal direction of $\Sigma$ in $M$ is unchanged under the conformal change of the ambient metric,  we know that the second variation
\begin{equation*}
  \frac{d^2}{ds^2}\bigg|_{s=0}Vol_f(\Sigma_s)=\frac{d^2}{ds^2}\bigg|_{s=0}\widetilde{Vol}(\Sigma_s)
\end{equation*}
for any normal variation $\Sigma_s$ of $\Sigma$. Thus $\Sigma$ is $f$-stable in $(M,g)$ if and only if $\Sigma$ is stable in $(M,\tilde{g})$ in the usual sense, and the $f$-index $ind_f$ of $\Sigma\subset (M,g)$ is equal to the index of $\Sigma\subset (M,\tilde{g})$ denoted by $\widetilde{ind}$.

\subsection{The relationship of the geometric quantities from the different viewpoints}\label{sec:2-3}
We can relate the second fundamental form $\tilde{A}$ of $\Sigma$ in $(M,\tilde{g})$ to the second fundamental form $A$ of $\Sigma$ in $(M,g)$ as follows: Choose an orthonormal basis $e_1,\cdots,e_{n}$ for $T\Sigma$ with respect to the metric induced from $(M,g)$. Then $A(e_i,e_j)=-g(\overline{\nabla}_{e_i}e_j,\nu)$. Under the conformal metric $\tilde{g}$, $\{\tilde{e}_1=e^{\frac f{n}}e_1,\cdots,\tilde{e}_{n}=e^{\frac f{n}}e_{n}\}$ and $\tilde{\nu}=e^{\frac f{n}}\nu$ are the orthonormal basis for $T\Sigma$ and unit normal vector field of $\Sigma$ in $(M,\tilde{g})$.  Then a direct calculation gives that
\begin{equation}\label{connection}
  \tilde{\nabla}_XY=\overline{\nabla}_XY-\frac 1{n}df(X)Y-\frac 1{n}df(Y)X+\frac 1{n}g(X,Y)\overline{\nabla}f
\end{equation}
for any tangent vector fields $X,Y$ on $M$, where $\tilde{\nabla}$ is the Levi-Civita connection on $(M,\tilde{g})$. Thus the second fundamental form $\tilde{A}$ satisfies
\begin{align}\label{sff}
  \tilde{A}(\tilde{e}_i,\tilde{e}_j) =& -\tilde{g}(\tilde{\nabla}_{\tilde{e}_i}\tilde{e}_j,\tilde{\nu}) \nonumber\\
  = &-\tilde{g}(\overline{\nabla}_{\tilde{e}_i}\tilde{e}_j -\frac 1{n}df(\tilde{e}_i)\tilde{e}_j-\frac 1{n}df(\tilde{e}_j)\tilde{e}_i+\frac 1{n}g(\tilde{e}_i,\tilde{e}_j)\overline{\nabla}f,\tilde{\nu})\nonumber\\
  =&-\tilde{g}(\overline{\nabla}_{\tilde{e}_i}\tilde{e}_j +\frac 1{n}g(\tilde{e}_i,\tilde{e}_j)\overline{\nabla}f,\tilde{\nu})\nonumber\\
  =&-e^{-\frac 2{n}f}g(e^{\frac 2{n}f}\overline{\nabla}_{e_i}e_j+\frac 1{n}e^{\frac 2{n}f}g(e_i,e_j)\overline{\nabla}f,e^{\frac f{n}}\nu)\nonumber\\
  =&e^{\frac f{n}}\left(A(e_i,e_j)-\frac 1{n}g(e_i,e_j)g(\overline{\nabla}f,\nu)\right).
\end{align}
Letting $i=j$ and summing from $1$ to $n$ in \eqref{sff} can also give the relation \eqref{mean-cur}. From \eqref{sff}, we can also obtain
\begin{align}\label{A^2}
  |\tilde{A}|^2_{\tilde{g}}=&\sum_{i,j=1}^n(\tilde{A}(\tilde{e}_i,\tilde{e}_j))^2\nonumber\\
  =&e^{\frac{2f}n}\left(\sum_{i,j=1}^n(A(e_i,e_j))^2+\frac 1ng(\overline{\nabla}f,\nu)^2-\frac 2nHg(\overline{\nabla}f,\nu)\right)\nonumber\\
  =&e^{\frac{2f}n}\left(|A|_g^2-\frac 1nH^2\right),
\end{align}
where in the last equality we used the fact $H=g(\overline{\nabla}f,\nu)$, since $\Sigma$ is $f$-minimal in $(M,g)$.

We next relate the $\widetilde{Ric}(\tilde{\nu},\tilde{\nu})$ and $Ric_f(\nu,\nu)$. Under the conformal change of the metric $\tilde{g}=e^{-\frac 2{n}f}g$ on $M$, we know that the Ricci curvature changes by the following formula
\begin{align}\label{ric-conformal}
  \widetilde{Ric} =& (n-1)\left(\frac{\overline{\nabla}^2f}n+\frac 1{n^2}df\otimes df-\frac 1{n^2}|\overline{\nabla} f|^2g\right) +\frac 1{n}\overline{\Delta} fg+Ric,
\end{align}
where $\overline{\nabla}, \overline{\Delta}$ are Levi-Civita connection and Laplacian operator on $(M,g)$. Then
\begin{align*}
  \widetilde{Ric}(\tilde{\nu},\tilde{\nu}) =&e^{\frac{2f}n}\biggl(\frac{n-1}{n^2}(n\overline{\nabla}^2f(\nu,\nu) +g(\overline{\nabla}f,\nu)^2-|\overline{\nabla} f|^2)+\frac 1{n}\overline{\Delta} f+Ric(\nu,\nu)\biggr)\\
  =&e^{\frac{2f}n}\biggl(Ric_f(\nu,\nu)-\frac{1}n\overline{\nabla}^2f(\nu,\nu) +\frac{n-1}{n^2}(H^2-|\overline{\nabla} f|^2) +\frac 1{n}\overline{\Delta} f\biggr)\\
  =&e^{\frac{2f}n}\biggl(Ric_f(\nu,\nu)+\frac{1}n(\Delta f+H^2) -\frac{n-1}{n^2}|\nabla f|^2\biggr),
\end{align*}
where in the last equality we used the elementary fact
\begin{equation*}
  \Delta f=\overline{\Delta} f-\overline{\nabla}^2f(\nu,\nu)-Hg(\overline{\nabla}f,\nu),
\end{equation*}
and $f$-minimal equation $H=g(\overline{\nabla}f,\nu)$. Then we have
\begin{align}\label{q-confor}
  |\tilde{A}|^2_{\tilde{g}}+\widetilde{Ric}(\tilde{\nu},\tilde{\nu}) =&e^{\frac{2f}n}\biggl(|A|_g^2+Ric_f(\nu,\nu)+\frac{1}n\Delta f -\frac{n-1}{n^2}|\nabla f|^2\biggr).
\end{align}

Taking the trace of \eqref{ric-conformal}, we have that the scalar curvature changes after the conformal change of metric by
\begin{equation}\label{scalar-conform}
  \tilde{R}=e^{\frac{2f}n}\left(R+2\overline{\Delta}f-\frac{n-1}n|\overline{\nabla}f|^2\right).
\end{equation}
Recall that the natural scalar curvature in smooth metric measure space is the Perelman's scalar curvature $R_f^{P}$  (see \cite{perel}) defined by
\begin{equation}\label{scalar-perelm}
  R_{f}^{P}=R+2\overline{\Delta}f-|\overline{\nabla}f|^2,
\end{equation}
which is different from $tr(Ric_f)=R+\overline{\Delta}f$. Then combining \eqref{scalar-conform} and \eqref{scalar-perelm}, we have
\begin{equation}\label{scalar-rel}
   \tilde{R}=e^{\frac{2f}n}(R_{f}^{P}+\frac 1n|\overline{\nabla}f|^2).
\end{equation}
Thus if the Perelman's scalar curvature $R_f^{P}$ is nonnegative, the the scalar curvature $\tilde{R}$ of the conformal metric $\tilde{g}$ is also nonnegative.

In particular, for self-shrinkers $\Sigma^n$ in $\R^{n+1}$, (which are $f$-minimal with $f=|x|^2/4$), we have
\begin{align}\label{q-conf-self-s}
  |\tilde{A}|^2_{\tilde{g}}+ \widetilde{Ric}(\tilde{\nu},\tilde{\nu}) =& e^{\frac{|x|^2}{2n}}\biggl( |A|^2_g+1-\frac 1{4n}|x|^2+\frac {1}{4n^2}|x^{\top}|^2\biggr),
\end{align}
where $x^{\top}$ denotes the tangential part of the position vector $x$ of $\Sigma$. From \eqref{scalar-conform} we also have that the scalar curvature of $(\R^{n+1},\tilde{g})$ satisfies
\begin{equation}\label{scalar-self-sr}
  \tilde{R}=e^{\frac{|x|^2}{2n}}\left(n+1-\frac{n-1}{4n}|x|^2\right),
\end{equation}
which appeared in \cite{CM09}. It is easy to see from \eqref{scalar-self-sr} that $\tilde{R}$ tends to negative infinity as $|x|\ra  \infty$.

\section{Proof of main theorem}\label{sec:proof}

We first have the following local singular compactness theorem for the space of embedded minimal hypersurfaces with bounded volume and index.
\begin{prop}\label{prop3-1}
Given a point $p$ in $M^{n+1}$ with $2\leq n\leq 6$. There exists $R>0$ such that the following is true: Suppose $\Sigma_j$ is a sequence of embedded minimal hypersurfaces in $B_{2R}=B_{2R}(p)\subset M$ with $\pt\Sigma_j\subset\pt B_{2R}$. If each $\Sigma_j$ has volume at most $\Lambda$ and index at most $I$ for some fixed $\Lambda>0,I\in \mathbb{N}$, then there exists a smooth embedded minimal hypersurface $\Sigma\subset B_R$ with $\pt\Sigma\subset\pt B_R$, and at most $I$ points $\{x_k\}\subset \Sigma$, and a subsequence of $\Sigma_j$ that converges in $B_R$ with finite multiplicity to $\Sigma$ away from $\{x_k\}$.
\end{prop}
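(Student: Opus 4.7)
The plan is to follow the strategy of Sharp \cite{b-sharp}. I would first shrink $R$ so that on $B_{2R}(p)$ the ambient metric $g$ is uniformly close to Euclidean in $C^2$, the monotonicity formula for stationary varifolds holds, and one has uniform local geometry. The overall scheme has three steps: $(i)$ a point-selection argument producing a finite singular set $\mathcal{S}\subset B_R$ with $\#\mathcal{S}\leq I$; $(ii)$ uniform local curvature estimates on $B_R\setminus\mathcal{S}$; $(iii)$ smooth subsequential convergence on $B_R\setminus\mathcal{S}$ together with a removable-singularities argument at $\mathcal{S}$.

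For step $(i)$ I would define
\[ \mathcal{S}=\left\{x\in B_R \,:\, \exists\,x_j\in \Sigma_j,\ x_j\to x,\ |A_{\Sigma_j}|(x_j)\to\infty\right\} \]
and show $\#\mathcal{S}\leq I$ by contradiction. If there were $I+1$ distinct points $x_1,\dots,x_{I+1}\in\mathcal{S}$, I would pick $\rho>0$ so small that the balls $B_\rho(x_k)$ are pairwise disjoint and compactly contained in $B_R$, and select $y_k^j\in \Sigma_j\cap B_\rho(x_k)$ realising near-maximal curvature. Rescaling $\Sigma_j$ about $y_k^j$ by $|A_{\Sigma_j}|(y_k^j)$ and using local volume control from monotonicity together with standard elliptic estimates, one extracts a smooth limit which is a non-flat complete minimal hypersurface $\widetilde{\Sigma}_k\subset \R^{n+1}$. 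Invoking the Schoen--Simon Bernstein-type theorem (valid for $2\leq n\leq 6$), which forces any stable complete minimal hypersurface in $\R^{n+1}$ to be a hyperplane, each $\widetilde{\Sigma}_k$ must be unstable and hence admit a non-trivial compactly supported $\widetilde{\varphi}_k$ with negative Jacobi quadratic form. Pulling these back through the rescaling gives, for $j$ large, compactly supported $\varphi_k^j$ on $\Sigma_j$ with pairwise disjoint supports and $Q^{\Sigma_j}(\varphi_k^j,\varphi_k^j)<0$; these produce $I+1$ linearly independent negative directions for the Jacobi form, contradicting $\operatorname{index}(\Sigma_j)\leq I$.

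Step $(ii)$ is then immediate from the definition of $\mathcal{S}$: on any compact $K\Subset B_R\setminus\mathcal{S}$ one has a uniform bound $\sup_{\Sigma_j\cap K}|A|\leq C(K)$. Combining this with the volume bound $\Lambda$ and the minimal surface equation, standard elliptic theory (graphical representations and interior Schauder estimates) produces a subsequence converging in $C^\infty_{\mathrm{loc}}(B_R\setminus\mathcal{S})$ to a smooth embedded minimal hypersurface $\Sigma\subset B_R\setminus\mathcal{S}$, with finite integer multiplicity bounded by $\Lambda/\theta_0$, where $\theta_0>0$ is the uniform lower density of smooth minimal hypersurfaces in $B_R$.

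For step $(iii)$, the varifold limit $V$ on $B_R$ is stationary (smooth convergence on $B_R\setminus\mathcal{S}$ together with $\mathcal{H}^{n-1}(\mathcal{S})=0$ suffices to pass to the limit in the first variation), and the monotonicity formula together with the uniform volume bound controls each density $\Theta^n(V,x_k)$. Allard's regularity theorem then upgrades $V$ to a smooth embedded minimal hypersurface across each $x_k$, identified with the closure of $\Sigma$. The main obstacle will be making the bubbling analysis of step $(i)$ rigorous — in particular controlling the rescaling scales, guaranteeing that each limit $\widetilde{\Sigma}_k$ is genuinely complete and non-flat, and ensuring the localised test functions have disjoint supports inside $\Sigma_j$. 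This is exactly where Schoen--Simon rigidity for stable complete minimal hypersurfaces in $\R^{n+1}$ enters, and why the dimension restriction $2\leq n\leq 6$ is indispensable.
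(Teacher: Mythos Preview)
Your approach differs from the paper's in an instructive way. You define the bad set $\mathcal{S}$ as the set of curvature blow-up points and bound $\#\mathcal{S}$ via a bubbling argument: rescale, obtain a non-flat complete minimal hypersurface in $\R^{n+1}$, invoke the Schoen--Simon \emph{Bernstein} theorem to deduce instability, and pull back test functions to manufacture $I+1$ independent negative directions on $\Sigma_j$. The paper instead passes to the varifold limit first (via Allard compactness) and bounds the singular set of the limit directly: if it were singular at $I+1$ points, the Schoen--Simon \emph{regularity} theorem (smooth convergence for sequences of \emph{stable} embedded minimal hypersurfaces with volume bounds) forces each $\Sigma_j\cap B_{\epsilon_0}(x_k)$ to be unstable for $j$ large, and a pigeonhole lemma for the index (Lemma~3.1 in \cite{b-sharp}) yields the contradiction. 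Both routes exploit Schoen--Simon and the dimension restriction $2\leq n\leq 6$, but in different incarnations; your steps (i)--(ii) are a legitimate alternative, modulo the standard point-picking refinement you already flag.

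There is, however, a genuine gap in your step (iii). Allard's regularity theorem does not remove an isolated singularity of a stationary integral varifold from a mere density \emph{bound}: it requires the density (equivalently, the tilt-excess) to be close to~$1$. A stationary integral varifold that is smooth and embedded on $B_\rho(x_k)\setminus\{x_k\}$ can be genuinely singular at $x_k$ even for $n\leq 6$; the cone over a Clifford torus in $\R^4$ is such an object with $n=3$. What rules this out here is precisely the index bound, which you have not used at this stage. The paper's argument is to show, again by a pigeonhole-on-index argument (now applied on annuli shrinking to $x_k$), that $\Sigma\cap\bigl(B_{\epsilon_k}(x_k)\setminus\{x_k\}\bigr)$ is \emph{stable} for some $\epsilon_k>0$; since the singular set is finite one has $\mathcal{H}^{n-2}(\mathrm{sing}\,\Sigma)=0$ for $n\geq 3$ (and $<\infty$ for $n=2$), and Schoen--Simon's regularity theorem \cite[Corollary~1]{sch-sim} then gives smoothness across each $x_k$. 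You should replace the Allard step with this stability-plus-Schoen--Simon argument.
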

\proof
This proposition is a local version of the second author's \cite{b-sharp} compactness theorem, and the proof is essentially the same. We refer the readers to \cite[\S 4]{b-sharp} for detail of the proof. But for convenience of reader, we would like to recall the key part of the proof to show how the index bound is used.  Basically, the index bound is used to show the limit hypersurface $\Sigma$ is smooth, and the convergence is smooth away from at most $I$ points.

Since each $\Sigma_j$ is minimal and has bounded volume, the Allard's compactness theorem tells us that a subsequence, still denoted by $\Sigma_j$, converges to $\Sigma$ in the varifold sense with $\Sigma$ stationary and integral. Without loss of generality, we can assume that each $\Sigma_j$ has index $I$. Firstly, we can show that $\Sigma\cap B_{2R}$ has at most $I$ singular points. Suppose for contradiction that there has at least $I+1$ singular points $\{x_k\}_{k=1}^{I+1}\subset sing(\Sigma\cap B_{2R})$. Let $\epsilon_0=\frac 12\min \{\min\limits_{k\neq l}d_g(x_k,x_l),inj(M)\}$, where $inj(M)$ denote the injectivity radius of $M$. The convergence is not smooth and graphical over $\Sigma_i\cap B_{\epsilon_0}(x_k)$ for each $k$. Then Schoen-Simon's regularity theorem \cite[Corollary 1]{sch-sim} implies that there is a subsequence $\Sigma_i$ and sufficiently large $i_0$ such that for all $i\geq i_0$, the index of $\Sigma_i\cap B_{\epsilon_0}(x_k)$ is at least one for all $1\leq k\leq I+1$. Since the $I+1$ subsets $\Sigma_i\cap B_{\epsilon_0}(x_k)$ are mutually disjoint. From Lemma 3.1 in \cite{b-sharp}, we have that for each $\Sigma_i$, at least one  $B_{\epsilon_0}(x_k)\cap\Sigma_i$ has index zero, which is a contradiction.

Next, we can use the index bound to show that for each $x_k\in sing(\Sigma\cap B_{2R})$, there exists some $\epsilon_k$ such that $(B_{\epsilon_k}(x_k)\setminus\{x_k\})\cap\Sigma$ is stable (see claim 2 in \cite[\S 4]{b-sharp}). Then
\begin{equation*}
  \mathcal{H}^{n-2}(sing(\Sigma\cap B_{2R}))\left\{\begin{array}{cl}
                                                     =0, &\textrm{ if }n\geq 3, \\
                                                     <\infty, &\textrm{ if }n=2 ,
                                                   \end{array}\right.
\end{equation*}
which by the regularity results of Schoen-Simon \cite[Corollary 1]{sch-sim} (and \cite[Remark 2.2]{b-sharp}) implies that $\Sigma\cap B_{R}$ is smooth.
\endproof

\begin{rem}
In the case of $n=2$, the above proposition recovers Proposition 2.1 in Colding-Minicozzi's paper \cite{CM09}. This follows since the local genus and area bound implies the index bound, thus the conclusion in \cite[Proposition 2.1]{CM09} follows from the above proposition.
\end{rem}

Using Proposition \ref{prop3-1} and a covering argument as in \cite{CM09,CMZ}, we can prove a global singular compactness theorem for complete $f$-minimal hypersurfaces.

\begin{prop}\label{prop3-2}
Let $(M^{n+1},g,e^{-f}d\mu)$ be a complete smooth metric measure space with $2\leq n\leq 6$. Given an integer $I$ and a constant $0<\Lambda<\infty$. Suppose that $\Sigma_j\subset M$ is a sequence of smooth complete embedded $f$-minimal hypersurfaces with $f$-index at most $I$, without boundary and with weighted volume $\int_{\Sigma_j}e^{-f}dv$ at most $\Lambda$. Then there exists a smooth embedded minimal hypersurface $\Sigma\subset M$, a subsequence of $\Sigma_j$ and a locally finite collection of points $\mathcal{S}\subset \Sigma$ such that $\Sigma_j$ converges smoothly (possibly with finite multiplicity) to $\Sigma$ away from $\mathcal{S}$.
\end{prop}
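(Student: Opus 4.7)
The plan is to reduce the statement to a minimal-hypersurface compactness result via the conformal change of Section \ref{conf-chang}, and then to apply Proposition \ref{prop3-1} with a covering and diagonal argument. Setting $\tilde g = e^{-\frac{2}{n}f}g$, by Section \ref{conf-chang} each $\Sigma_j$ is a smooth complete embedded minimal hypersurface in $(M,\tilde g)$ with $\widetilde{Vol}(\Sigma_j) = Vol_f(\Sigma_j) \le \Lambda$ and $\widetilde{ind}(\Sigma_j) = ind_f(\Sigma_j) \le I$. Consequently, for any precompact domain $\Omega \subset M$ the $\tilde g$-volume of $\Sigma_j \cap \Omega$ is at most $\Lambda$, and by the monotonicity of the index over compact subdomains from the end of Section \ref{sec:vari}, the Morse index of $\Sigma_j \cap \Omega$ is at most $I$.

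Next I would exhaust $M$ by precompact open sets $U_1 \subset\subset U_2 \subset\subset \cdots$ with $\bigcup_\ell U_\ell = M$, and cover each $\overline{U_\ell}$ by finitely many balls $B_{R_i}(p_i)$ of the type supplied by Proposition \ref{prop3-1} applied to $(M,\tilde g)$. On each $B_{2R_i}(p_i)$ the family $\{\Sigma_j \cap B_{2R_i}(p_i)\}$ consists of embedded minimal hypersurfaces with boundaries in $\partial B_{2R_i}(p_i)$ (since each $\Sigma_j$ is complete with no boundary), of $\tilde g$-volume at most $\Lambda$ and index at most $I$. Proposition \ref{prop3-1} then yields, ball-by-ball, a subsequence converging in each $B_{R_i}(p_i)$ smoothly and with finite multiplicity, away from at most $I$ singular points, to a smooth embedded minimal hypersurface. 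A standard diagonal extraction over the countable collection of such balls in the exhaustion produces a single subsequence of $\{\Sigma_j\}$ that converges in every one of them.

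On overlapping balls the two limits must coincide off their combined (finite) singular sets by uniqueness of smooth/varifold limits, so they patch to a global smooth embedded minimal hypersurface $\Sigma \subset (M,\tilde g)$ defined off a set $\mathcal{S}$. Since each $\overline{U_\ell}$ is covered by finitely many balls, each contributing at most $I$ singular points, $\mathcal{S}$ is locally finite in $M$. Transferring back to $(M,g)$ via Section \ref{conf-chang}, $\Sigma$ is $f$-minimal in $(M,g)$, and the convergence $\Sigma_j \to \Sigma$ smoothly and with finite multiplicity away from $\mathcal{S}$ is the same in either metric.

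The main obstacle I anticipate is the bookkeeping required to glue the local limits consistently across overlapping balls, to track multiplicities, and to propagate the index bound $I$ through the repeated passage to subsequences; all of this rests only on the monotonicity $ind_f(\Omega)\le ind_f(\Omega')$ for $\Omega\subset\Omega'$, the global weighted-volume bound, and the uniqueness of varifold limits. The dimension restriction $2\le n\le 6$ enters exclusively through Proposition \ref{prop3-1}, so once Proposition \ref{prop3-1} is granted the global statement really is a covering argument.
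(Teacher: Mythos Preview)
Your proposal is correct and follows essentially the same route as the paper: pass to the conformal metric $\tilde g=e^{-\frac{2}{n}f}g$ so that the $\Sigma_j$ become minimal with $\tilde g$-volume $\le\Lambda$ and index $\le I$, cover $M$ by countably many small geodesic balls, apply Proposition~\ref{prop3-1} on each, and extract a diagonal subsequence. Your write-up is in fact somewhat more detailed than the paper's (which simply cites the covering argument of \cite{CM09,CMZ}), particularly in spelling out the patching on overlaps and the local finiteness of $\mathcal S$; none of this extra bookkeeping introduces anything new.
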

\proof
Recall that an $f$-minimal hypersurface is a minimal hypersurface under the conformal change of the ambient metric, see \S \ref{conf-chang}. Then each $\Sigma_j$ is a minimal hypersurface in $(M^{n+1},\tilde{g})$, where $\tilde{g}=e^{-\frac 2nf}g$ is the conformal metric on $M$.  Let $\tilde{B}_{2R}(p)\subset M$ denote the geodesic ball in $(M,\tilde{g})$ of radius $2R$ centred at some point $p\in M$. Since on each geodesic ball $\tilde{B}_{2R}$, the area of $\Sigma_j\cap\tilde{B}_{2R}$ is at most $\int_{\Sigma_j}d\tilde{v}=\int_{\Sigma_j}e^{-f}dv\leq \Lambda$, and the index of $\Sigma_j\cap\tilde{B}_{2R}$ is at most $I$, then Proposition \ref{prop3-2} follows by covering $M^{n+1}$ with a countable collection of small geodesic balls $\tilde{B}_{2R}$, applying Proposition \ref{prop3-1} and passing to a diagonal subsequence. Moreover, Proposition \ref{prop3-1} tells us the convergence is smooth away from a locally finite set $\mathcal{S}$, in fact on each $\tilde{B}_R$, the number of points $\mathcal{S}\cap \tilde{B}_R$ is at most $I$.
\endproof

Now we can prove our main theorem.
\begin{proof}[Proof of Theorem \ref{main-thm}]
By the assumption $Ric_f\geq \kappa>0$ on $M$, we have that the fundamental group of $M$ is finite, see \cite{morg,Wei-Wylie}. After passing to a covering argument, without loss of generality, we can assume that $M$ is simply connected. Since each $\Sigma_j$ has finite weighted volume, then each $\Sigma_j$ is two-sided, see \cite[Proposition 8]{CMZ}.  From Proposition \ref{prop3-2}; thus to prove our theorem it remains to show the convergence is smooth everywhere. By Allard's regularity theorem, it suffices to check that the convergence has multiplicity one.

If the multiplicity of the convergence is greater than one, by following the same argument as in the proof of Proposition 3.2 in \cite{CM09} (see also \cite{b-sharp}), there exists a smooth positive function $u$ on $\Sigma$ satisfying $L_fu=0$, which by a classical argument (see e.g., \cite[Lemma 1.36]{CM2011}) implies that $\Sigma$ is a two-sided $f$-stable $f$-minimal hypersurface. However, there exists no complete two-sided $f$-stable $f$-minimal hypersurface without boundary and with finite weighted volume in complete smooth metric measure space $(M^{n+1},g,e^{-f}d\mu)$ satisfying $Ric_f\geq \kappa>0$, see \cite[Theorem 3]{CMZ}. This is a contradiction, so the multiplicity of the convergence must be one and the convergence is smooth everywhere.

Finally, we show that the $f$-index of the limit hypersurface $\Sigma$ is at most $I$ by adapting the argument in \cite{b-sharp}. Suppose the $f$-index of $\Sigma$ is at least $I+1$, then there exists a sufficiently large $R_1$ such that the $f$-index of $B_{\rho}(p)$ is at least $I+1$ for all $\rho\geq R_1$, where $B_{\rho}(p)$ is a geodesic ball on $\Sigma$ of radius $\rho$ centred at $p\in\Sigma$. There exist $I+1$ functions $\{u_k\}_{k=1}^{I+1}$ compactly supported in $B_{R_1}$ which are weighted $L^2$ orthogonal, i.e.,
\begin{equation*}
\int_{\Sigma}u_ku_le^{-f}dv=\delta_{kl},\quad k,l=1,\cdots,I+1,
\end{equation*}
such that
\begin{equation*}
  Q_f^{\Sigma}(u_k,u_k)=\lambda_k,\quad \lambda_k<0,\quad k=1,\cdots,I+1.
\end{equation*}
Let $\phi_k$ be an arbitrary $C^1$ extension of $u_k\nu$ to the rest of $M$ and $u_k^i=g(\phi_k,\nu_i)$,  where $\nu_i$ is the unit normal vector of $\Sigma_i$. Then there exists sufficiently large $i_0$ such that for all $i\geq i_0$, we have $Q_f^{\Sigma_i}(u_k^i,u_k^i)\leq \frac 34 \lambda_{I+1}<0$ for each $k$, where we have ordered the eigenvalues such that $\lambda_1<\lambda_2\leq\cdots\leq \lambda_{I+1}<0$. Choose $p_i\in\Sigma_i$ such that $p_i\ra p$ as $i\ra\infty$. Let $\xi(\rho):\R^+\ra\R^+$ be a smooth cut-off function satisfying
\begin{equation*}
  \xi(\rho)=\left\{\begin{array}{cl}
                     1,&\textrm{ for }\rho\in [0,R_2],\\
                     0 &\textrm{ for }\rho\in[2R_2,\infty),
                   \end{array}\right. \quad 0\leq \xi\leq 1,\quad |\xi'|\leq C/{R_2},
\end{equation*}
We can choose $R_2\gg R_1$ sufficiently large such that $Q_f^{\Sigma_i}(\xi u_k^i,\xi u_k^i)\leq \frac 12 \lambda_{I+1}<0$ for each $k$ and all $i\geq i_0$, where we view $\xi(\rho)$ as a composite function of $\xi$ and the distance function $\rho$ on $\Sigma_i$ from a fixed point $p_i$. Note that each $\xi u_k^i$ is compactly supported in $B_{2R_2}(p_i)\subset\Sigma_i$ and the $f$-index of $\Sigma_i$ is at most $I$, $\{\xi u_k^i\}_{k=1}^{I+1}$ must be linearly dependent for all $i\geq i_0$. After passing to a subsequence and re-ordering, without loss of generality, we can assume that
\begin{equation}\label{pf-thm-2}
  \xi u_{I+1}^i=a_1^i\xi u_1^i+\cdots+a_I^i\xi u_I^i,\quad |a_k^i|\leq 1, \quad 1\leq k\leq I,~i\geq i_0.
\end{equation}
Note that
\begin{equation*}
  \int_{\Sigma_i}|\phi_k^{\top_i}|^2e^{-f}dv_{\Sigma_i}\ra 0,\quad\textrm{ as }i\ra \infty,
\end{equation*}
since we know that $\Sigma_i$ converges to $\Sigma$ smoothly everywhere, where $\phi_k^{\top_i}$ means the tangential part of the vector field $\phi_k$ on each $\Sigma_i$. Then
\begin{align}\label{pf-thm-3}
  \lim_{i\ra\infty}\int_{\Sigma_i}\xi^2u_k^iu_l^ie^{-f}dv_{\Sigma_i}=&\lim_{i\ra\infty}\int_{\Sigma_i}\xi^2(g(\phi_k,\phi_l)-g(\phi_k^{\top_i},\phi_l^{\top_i}))e^{-f}dv_{\Sigma_i}\nonumber\\
  =&\int_{\Sigma}u_ku_le^{-f}dv=\delta_{kl},\quad k,l=1,\cdots,I+1,
\end{align}
where in the second equality we used the fact that $u_k$ are compactly supported in $B_{R_1}(p)$ and $\xi\equiv 1$ in $B_{R_2}(p)$.  Using \eqref{pf-thm-2} and \eqref{pf-thm-3}, we have for all $k=1,2,\cdots,I$,
\begin{equation}\label{pf-thm-4}
  0=\lim_{i\ra\infty}\int_{\Sigma_i}\xi^2 u_k^iu_{I+1}^ie^{-f}dv_{\Sigma_i}=\lim_{i\ra\infty}a_k^i.
\end{equation}
Combining \eqref{pf-thm-2}, \eqref{pf-thm-3} and \eqref{pf-thm-4}, we obtain
\begin{align*}
  \lim_{i\ra\infty}\int_{\Sigma_i}(\xi u_{I+1}^i)^2e^{-f}dv_{\Sigma_i}=&\lim_{i\ra\infty}\sum_{k=1}^{I}(a_k^i)^2\int_{\Sigma_i}(\xi u_{k}^i)^2e^{-f}dv_{\Sigma_i}=0,
\end{align*}
which contradicts \eqref{pf-thm-3}. Thus the $f$-index  of $\Sigma$ is at most $I$ and we have completed the proof of Theorem \ref{main-thm}.
\end{proof}

\section{Estimate on the upper bound of the $f$-index}\label{sec:index-bd}

Here we prove some upper bounds on compact $f$-minimal surfaces. We will use results of Ejiri-Micallef (when $n=2$) and Cheng-Tysk (when $n\geq 3$) which together give the following Theorem. 

\begin{thm}[\cite{Cheng-Tysk} \cite{Eiji-M}]\label{thm-min}
Let $\Sigma^n\subset \tilde{M}^m$ be a closed minimally immersed manifold in a closed Riemannian manifold $\tilde{M}$. Then the index of $\Sigma$ satisfies
\begin{equation}\label{min}
ind(\Sigma)\leq \twopartdef{C(n,\tilde{M})(m-n)\int_{\Sigma} \max\{1, |A|^2 + Ric_{\tilde{M}}(\nu,\nu)\}^{\f{n}{2}}dv}{n\geq 3}{C(\tilde{M})m(\int_{\Sigma} dv + \gamma-1)}{n=2,} 
\end{equation}
where $C(\tilde{M})$ is a constant depending on an isometric embedding of $\tilde{M}$ into some Euclidean space. 
\end{thm}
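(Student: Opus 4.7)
The plan is to treat the two cases $n\geq 3$ and $n=2$ by different methods, as carried out by Cheng--Tysk and Ejiri--Micallef respectively. In both cases the starting point is the second variation formula, which identifies $\mathrm{ind}(\Sigma)$ with the number of negative eigenvalues of the Jacobi operator
\begin{equation*}
J = \Delta^{\perp} + |A|^2 + \mathrm{Ric}_{\tilde{M}}(\nu,\nu)
\end{equation*}
acting on sections of the normal bundle $N\Sigma$, which has rank $(m-n)$. A local orthonormal trivialisation of $N\Sigma$ turns the eigenvalue problem into a coupled scalar Schr\"odinger system, and the factor $(m-n)$ in the $n\geq 3$ bound is meant to come from counting eigenvalues component-wise.

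For $n\geq 3$ I would aim for a Cwikel--Lieb--Rozenblum type count: the number of negative eigenvalues of $-\Delta + W$ on a closed $n$-manifold is bounded by $C\int_\Sigma (\max\{1,W_-\})^{n/2}\, dv$, with $C$ depending only on an appropriate Sobolev constant. To produce such a Sobolev constant that depends only on $\tilde{M}$, the plan is to apply a Nash isometric embedding $\tilde{M}\hookrightarrow \R^N$ and to view $\Sigma$ as a submanifold of $\R^N$; the Michael--Simon Sobolev inequality then gives a uniform constant, since the second fundamental form of $\Sigma$ in $\R^N$ is controlled by $|A|$ plus an $\tilde{M}$-dependent term coming from the second fundamental form of $\tilde{M}\subset\R^N$. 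Finally, taking $W = -(|A|^2 + \mathrm{Ric}_{\tilde{M}}(\nu,\nu))$ and replacing $W_-$ by the rougher $\max\{1, |A|^2 + \mathrm{Ric}_{\tilde{M}}(\nu,\nu)\}$ — the inserted $1$ absorbing contributions from regions where the potential is small and the count would otherwise be controlled by volume alone — delivers the claimed inequality.

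For $n=2$ the plan is to exploit conformality instead. Composing with a Nash embedding $\tilde{M}\hookrightarrow\R^N$ presents $\Sigma^2$ as a branched conformal immersion into $\R^N$, so $\Sigma$ inherits a well-defined conformal structure. Bourguignon--Li--Yau style test functions, built from Euclidean coordinates post-composed with a well-chosen M\"obius transformation, then bound the number of negative eigenvalues of a Schr\"odinger operator on a Riemann surface in terms of the conformal volume, which by Li--Yau's estimate is controlled by $C\cdot\mathrm{Area}(\Sigma)\cdot(\gamma+1)$. Summing the count line-subbundle by line-subbundle of $N\Sigma$ absorbs both the codimension and the ambient Euclidean dimension into the single factor of $m$, and a handle-decomposition argument as in Ejiri--Micallef converts the multiplicative $(\gamma+1)$ into the additive form $\mathrm{Area}(\Sigma)+(\gamma-1)$.

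The main obstacle in both cases is the passage from a scalar Schr\"odinger estimate to the vector-valued Jacobi operator on $N\Sigma$ while keeping the constants depending only on the ambient geometry of $\tilde{M}$. For $n\geq 3$ the required uniformity is encoded in the Nash embedding combined with the Michael--Simon inequality; for $n=2$ it is the conformal invariance of branched immersions under composition with the Nash map, which also explains why the codimension enters only linearly rather than as an $n/2$-power.
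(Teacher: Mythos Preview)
The paper does not give its own proof of this statement; it is quoted as a known result from \cite{Cheng-Tysk} and \cite{Eiji-M} and then applied. So the comparison is with those sources, and the paper's proof of the related Theorem~\ref{thm-index-up} gives a good indication of the Cheng--Tysk side.

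Your $n\geq 3$ sketch is essentially the Cheng--Tysk argument and matches what the paper does in detail for the weighted analogue: Nash-embed to get a Michael--Simon Sobolev inequality with constant depending only on $\tilde M$, then run the Li--Yau heat-kernel estimate for the Schr\"odinger operator $\Delta - q$ with $q=|A|^2+\mathrm{Ric}_{\tilde M}(\nu,\nu)$ to obtain a CLR-type bound $\mathrm{ind}\leq C\int_\Sigma (\max\{1,q\})^{n/2}\,dv$; the factor $(m-n)$ comes from the rank of the normal bundle. This part is fine.

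Your $n=2$ sketch, however, is not the Ejiri--Micallef argument and contains a genuine gap. The Bourguignon--Li--Yau conformal-volume machinery produces \emph{upper bounds on eigenvalues} (e.g.\ $\lambda_1\cdot\mathrm{Area}\leq 2V_c$), not bounds on the number of negative eigenvalues of a Schr\"odinger operator; there is no mechanism there for counting index. The step ``bound the number of negative eigenvalues \ldots\ in terms of the conformal volume'' does not follow, and the subsequent ``handle-decomposition'' conversion of a multiplicative $(\gamma+1)$ into an additive $(\gamma-1)$ is not something that appears in \cite{Eiji-M}. What Ejiri--Micallef actually do is compare the second variation of \emph{area} with the second variation of \emph{energy} for the minimal (hence, since $n=2$, conformal harmonic) map $\Sigma\to\tilde M$. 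On normal variations the two index forms differ by a term involving a $\bar\partial$-type operator on the complexified normal bundle; the kernel of that operator consists of holomorphic sections, whose dimension is bounded via Riemann--Roch by a constant times $(\gamma-1)$ (times codimension). The energy index, after Nash-embedding $\tilde M\hookrightarrow\R^N$, is controlled by the index of $N$ copies of a scalar Schr\"odinger operator with potential bounded in terms of the embedding, hence by $C(\tilde M)\cdot\mathrm{Area}(\Sigma)$. This is how the additive form $\mathrm{Area}(\Sigma)+\gamma-1$ arises directly, with the factor $m$ coming from the ranks of the bundles involved.
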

\begin{rem}
We point out here that if $\tilde{M}$ is a compact manifold with boundary, then the above theorem is trivially true. This follows by exactly the same proof as the above authors', except we have to isometrically embed our manifold with boundary into Euclidean space so that the second fundamental form is uniformly bounded in $L^{\infty}$ - but this follows easily since the Nash embedding theorem holds for manifolds with boundary and $\tilde{M}$ is compact. 
\end{rem}

\begin{proof}[Proof of Proposition \ref{1.8}]
Under the assumptions of Proposition \ref{1.8} we can let $(\tilde{M}, \tilde{g}) =(K, e^{-\f{2f}{n}}g)$ and apply Theorem \ref{thm-min} (and the proceeding remark) to give 
\begin{equation}
\widetilde{ind}(\Sigma)\leq \twopartdef{C(n,\tilde{M})\int_{\Sigma} \max\{1, |\tilde{A}|_{\tilde{g}}^2 + \widetilde{Ric}_{\tilde{M}}(\tilde{\nu},\tilde{\nu})\}^{\f{n}{2}}d\tilde{v}}{n\geq 3}{C(\tilde{M})(\int_{\Sigma} d\tilde{v} + \gamma-1)}{n=2.} 
\end{equation}
It is an easy exercise using \eqref{q-confor} to finish the proof. 
\end{proof}

We will now prove an estimate on the $f$-index of an $f$-minimal hypersurface which is not necessarily compact with $n\geq 3$. We do this by following the methods of Cheng-Tysk \cite{Cheng-Tysk}. Firstly, we recall a Sobolev inequality in the weighted setting proved by Impera-Rimoldi \cite{Impera-Ri}.
\begin{thm}[\cite{Impera-Ri}]\label{sob-thm}
Let $\Sigma^n$ be an immersed hypersurface in $(M^{n+1},g,e^{-f}d\mu)$. Suppose that the sectional curvature $sect(M)\leq 0$ and there exists a constant $c_n>0$ such that
\begin{equation}\label{sob-condi}
  \limsup_{\rho\ra 0^+}\frac{Vol_f(S_{\rho}(\xi))}{\rho^n}\geq c_n,
\end{equation}
for all $\xi\in \Sigma$, where $S_{\rho}(\xi)$ denotes
\begin{equation*}
 S_{\rho}(\xi)=\{x\in\Sigma|\textrm{dist}_M(x,\xi)\leq \rho\}.
\end{equation*}
Then there exists a constant $C_n$ such that for any nonnegative compactly supported $C^1$ function $\varphi$ on $\Sigma$, we have
\begin{equation}\label{sobolev-weighted}
  \left(\int_{\Sigma}\varphi^{\frac n{n-1}}e^{-f}dv\right)^{\frac {n-1}n}\leq C_n\int_{\Sigma}\left(|\nabla \varphi|+\varphi(|H_f|+|\overline{\nabla}f|)\right)e^{-f}dv.
\end{equation}
\end{thm}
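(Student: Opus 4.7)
The plan is to adapt the Michael--Simon--Hoffman--Spruck scheme for isoperimetric and Sobolev inequalities on submanifolds of ambient spaces of nonpositive sectional curvature to the weighted setting. The argument splits naturally into three stages, culminating in a level-set iteration.

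\textbf{Step 1 (Weighted monotonicity).} First I would establish a monotonicity-type differential inequality for the weighted density ratio
\[
\theta_f(\xi,\rho) = \rho^{-n}\int_{S_\rho(\xi)} e^{-f}\,dv,\qquad \xi\in\Sigma.
\]
This is derived by differentiating $\theta_f$ in $\rho$ and applying the divergence theorem on $\Sigma$ to the radial vector field $(1/2)\overline{\nabla} r^2$, where $r$ denotes the $M$-distance to $\xi$. Three correction terms beyond the Euclidean computation appear: a curvature term that is \emph{nonnegative} under the Hessian comparison $\overline{\nabla}^2 r \ge (1/r)(g - dr\otimes dr)$ (valid exactly because $\mathrm{sect}(M)\le 0$); an $f$-mean curvature contribution from the trace of the ambient Hessian along $T\Sigma$; and a weight-derivative term from $\partial_r e^{-f}$. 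Collecting these yields an inequality of the form
\[
\tfrac{d}{d\rho}\theta_f(\xi,\rho) \ge -C_n\,\rho^{-n}\int_{S_\rho(\xi)}\bigl(|H_f|+|\overline{\nabla} f|\bigr)e^{-f}\,dv.
\]

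\textbf{Step 2 (Isoperimetric lower bound).} Integrating this differential inequality from $0$ to $R$ and using (\ref{sob-condi}) as the initial value at $\rho=0^+$, I would obtain an isoperimetric-type lower bound: $\theta_f(\xi,R) \ge c_n$ modulo a controlled weighted integral of $|H_f|+|\overline{\nabla} f|$ over $S_R(\xi)$. This plays the role of the classical Allard density bound in the unweighted proof.

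\textbf{Step 3 (Level-set iteration).} For a nonnegative compactly supported $\varphi\in C^1(\Sigma)$, I would apply the bound from Step 2 to each super-level set $\{\varphi>t\}$, with a radius $R=R(t)$ tuned so that the level set just fits inside a ball, and then integrate in $t$ via the layer-cake identity
\[
\int_\Sigma \varphi^{\frac{n}{n-1}} e^{-f}\,dv = \tfrac{n}{n-1}\int_0^\infty t^{\frac{1}{n-1}}\,\mathrm{Vol}_f\bigl(\{\varphi>t\}\bigr)\,dt,
\]
converting the arising boundary integrals, via the weighted coarea formula, into $\int_\Sigma |\nabla\varphi|\,e^{-f}\,dv$.

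\textbf{Main obstacle.} The delicate step is the monotonicity formula in Step 1: one must verify that $\mathrm{sect}(M)\le 0$ contributes a term of the correct sign (so that $C_n$ depends only on $n$, not on the ambient geometry), and that the cross term $-\langle\overline{\nabla} f,\overline{\nabla} r\rangle/r$ produced by differentiating the weight is cleanly absorbable into the $|\overline{\nabla} f|$-term on the right-hand side. The hypothesis (\ref{sob-condi}) is precisely the initial condition needed to run the integration from $\rho=0^+$ and is automatic with $c_n=\omega_n$ at any smooth point of $\Sigma$; the iteration in Step 3 is then a routine adaptation of the classical Michael--Simon argument.
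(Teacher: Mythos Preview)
This theorem is not proved in the paper; it is quoted from Impera--Rimoldi \cite{Impera-Ri} and used as a black box, with only the remark afterwards that condition \eqref{sob-condi} holds automatically (with $c_n=\omega_n$) whenever $\sup_\Sigma f<\infty$. So there is no ``paper's own proof'' to compare against.

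That said, your outline is the standard Hoffman--Spruck adaptation of the Michael--Simon argument to ambient spaces of nonpositive sectional curvature, carried to the weighted setting, and this is essentially the route taken in \cite{Impera-Ri}. One minor comment: the usual proof does not isolate a bare isoperimetric inequality (your Step~2) and then pass to Sobolev by layer-cake (your Step~3); rather the test function $\varphi$ is built into the monotonicity integrand from the outset and the Sobolev inequality drops out directly. Your level-set variant is equivalent in principle, since the $L^1$-Sobolev and isoperimetric inequalities are interchangeable via coarea, but the direct route avoids having to make precise the phrase ``radius $R=R(t)$ tuned so that the level set just fits inside a ball'', which as stated is ambiguous when $\{\varphi>t\}$ has many components or large diameter.
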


The condition \ref{sob-condi} is satisfied if we assume that $\sup_{\Sigma}f<\infty$, since
\begin{equation*}
  \limsup_{\rho\ra 0^+}\frac{Vol(S_{\rho}(\xi))}{\rho^n}\geq \omega_n,
\end{equation*}
for all $\xi\in \Sigma$, where $\omega_n$ denotes the volume of $n$-dimensional unit ball in $\R^n$. We remark that Batista-Mirandola \cite{Bat-Miran} also obtained a similar Sobolev inequality in the weighted setting independently.

The proof of Theorem \ref{thm-min} in \cite{Cheng-Tysk} used the heat kernel and Michael-Simon Sobolev inequality, inspired by a similar argument in Li-Yau's paper \cite{Li-yau}. In the following, we will adapt the idea in \cite{Cheng-Tysk} to obtain an upper bound on the $f$-index of $f$-minimal hypersurface.

\begin{thm}\label{thm-index-up}
Suppose $n\geq 3$, and let $\Sigma^n $ be a smooth complete $f$-minimal hypersurface in $(M^{n+1},g,e^{-f}d\mu)$, where $M$ satisfies $sect(M)\leq 0$ and \eqref{sob-condi} as in Theorem \ref{sob-thm},  and $|\overline{\nabla}f|\leq c_1$ on $\Sigma$. Then the $f$-index of $\Sigma$ satisfies
\begin{equation}\label{indx-upperbd}
  ind_f(\Sigma)\leq C(n,M,c_1)\int_{\Sigma}(\max\{\frac 12,|A|^2+Ric_f(\nu,\nu)\})^{\frac n2}e^{-f}dv,
\end{equation}
where $C(n,M,c_1)$ is a uniform constant depending on $n,M,c_1$.
\end{thm}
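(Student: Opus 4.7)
The strategy is to adapt the argument of Cheng--Tysk \cite{Cheng-Tysk} (the heat-kernel/CLR approach to counting negative eigenvalues) to the weighted setting, using the weighted Sobolev inequality of Impera--Rimoldi (Theorem \ref{sob-thm}) in place of the Michael--Simon inequality used in the minimal hypersurface case.

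\emph{Step 1 (Reduction to compact domains, and replacing $V$ by $\bar V$).} By the characterisation $ind_f(\Sigma)=\sup_{\Omega\subset\Sigma}ind_f(\Omega)$ recalled at the end of \S\ref{sec:vari}, it suffices to bound $ind_f(\Omega)$ uniformly for compact $\Omega\subset\Sigma$ with smooth boundary. Set $V=|A|^2+Ric_f(\nu,\nu)$ and $\bar V=\max\{1/2,V\}\ge V$. Since enlarging the potential can only increase the number of negative eigenvalues of $-L_f=-\Delta_f - V$ (by the min-max principle), it is enough to prove the bound with $V$ replaced by $\bar V$. Note $\bar V\ge 1/2>0$ everywhere.

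\emph{Step 2 (Weighted $L^2$-Sobolev inequality).} Since $\Sigma$ is $f$-minimal we have $H_f=0$, and by assumption $|\overline\nabla f|\le c_1$, so Theorem \ref{sob-thm} gives
\begin{equation*}
\left(\int_{\Sigma}\varphi^{n/(n-1)}e^{-f}dv\right)^{(n-1)/n}\le C_n\int_\Sigma(|\nabla\varphi|+c_1\varphi)e^{-f}dv
\end{equation*}
for any $\varphi\in C^1_c(\Sigma)$, $\varphi\ge 0$. A standard substitution $\varphi=|u|^{2(n-1)/(n-2)}$ together with Cauchy--Schwarz (valid because $n\ge 3$) upgrades this to the $L^2$ Sobolev inequality
\begin{equation*}
\left(\int_\Sigma |u|^{2n/(n-2)}e^{-f}dv\right)^{(n-2)/n}\le C'(n,c_1)\int_\Sigma(|\nabla u|^2+u^2)e^{-f}dv.
\end{equation*}

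\emph{Step 3 (Heat-kernel estimate for $\Delta_f$).} The weighted Laplacian $\Delta_f$ is self-adjoint on $L^2(\Sigma,e^{-f}dv)$ (see \eqref{self-aj}). Applying the Nash/Davies argument to the inequality of Step 2 (after absorbing the lower-order $u^2$ term into an exponential shift), one obtains a diagonal bound for the weighted heat kernel $p_t(x,y)$ of $\Delta_f$:
\begin{equation*}
p_t(x,x)\le C_2(n,c_1)\,t^{-n/2}e^{\alpha t},\qquad t>0,
\end{equation*}
for some $\alpha=\alpha(n,c_1)$. This is the weighted analogue of the ultracontractive estimate used in \cite{Cheng-Tysk}.

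\emph{Step 4 (CLR-type bound on negative eigenvalues).} This is the core step, following the scheme of Li--Yau and Cheng--Tysk. Writing the Schr\"odinger operator as $-\Delta_f-\bar V$ on $\Omega$ with Dirichlet conditions, one uses the diagonal heat-kernel bound of Step 3 together with a Golden--Thompson/Trotter type inequality
\begin{equation*}
\mathrm{tr}\,e^{t(\Delta_f+\bar V)}\le\int_\Omega p_t(x,x)e^{t\bar V(x)}e^{-f}dv,
\end{equation*}
(or equivalently a Feynman--Kac representation). Combining this with the elementary inequality $\#\{\lambda_k<0\}\le e^{\tau}\sum_{\lambda_k<0}e^{-\tau\lambda_k}$ for a shift $\tau>0$, and optimising $t$ and $\tau$ pointwise depending on the size of $\bar V$, yields
\begin{equation*}
ind_f(\Omega)\le C_3(n,c_1)\int_\Omega \bar V^{n/2}e^{-f}dv.
\end{equation*}
The shift by $\alpha t$ in the heat kernel bound and the requirement $\bar V\ge 1/2$ are what make the integrand pointwise comparable to $\bar V^{n/2}$ after the optimisation.

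\emph{Step 5 (Conclusion).} Taking the supremum over $\Omega$ and recalling $\bar V=\max\{1/2,|A|^2+Ric_f(\nu,\nu)\}$ yields \eqref{indx-upperbd}.

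\emph{Main obstacle.} The technical heart of the argument is Step 4: pushing the classical CLR inequality through in the weighted, non-compact setting. The weighted measure $e^{-f}dv$ and the additive $c_1\varphi$ term in the Impera--Rimoldi Sobolev inequality must be carried through the heat-kernel Nash iteration without spoiling the homogeneity $t^{-n/2}$, and the Golden--Thompson/Feynman--Kac comparison has to be justified for $\Delta_f$ rather than the ordinary Laplacian. Once these are in place, the counting argument is formally identical to \cite{Cheng-Tysk}, with all constants depending only on $n$, $c_1$ and the Sobolev constant $C_n$ coming from the geometry of $M$.
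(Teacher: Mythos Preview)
Your proposal follows a genuinely different route from the paper. The paper adapts Cheng--Tysk/Li--Yau directly: on a compact domain $D$ it considers the heat kernel $H(x,y,t)$ of the \emph{rescaled} operator $\partial_t-\tfrac{1}{p(x)}\Delta_f$, where $p=\max\{\tfrac12,|A|^2+Ric_f(\nu,\nu)\}$, sets $h(t)=\sum_i e^{-2\lambda_i t}$ with $\lambda_i$ the Dirichlet eigenvalues of $-\tfrac{1}{p}\Delta_f$, and derives a differential inequality for $h$ by applying the weighted Sobolev inequality to $\varphi=H^{2(n-1)/(n-2)}$. Integrating that inequality and using $ind_f(D)\le\inf_{t>0}e^{2t}h(t)$ finishes the proof. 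The potential $p$ is absorbed into the reference measure from the outset, so no semigroup comparison of Golden--Thompson type is ever needed.

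Your Step~4, by contrast, has a gap as written. From Golden--Thompson together with the diagonal estimate of Step~3 you obtain, for each fixed $t>0$,
\[
ind_f(\Omega)\le\mathrm{tr}\,e^{t(\Delta_f+\bar V)}\le C_2\,t^{-n/2}e^{\alpha t}\int_\Omega e^{t\bar V(x)}\,e^{-f}dv,
\]
but $t$ is a single global parameter and cannot be ``optimised pointwise depending on the size of $\bar V$''; minimising the right-hand side over $t$ produces a bound in terms of $\|\bar V\|_\infty$ and $Vol_f(\Omega)$, not $\int_\Omega\bar V^{n/2}e^{-f}dv$. Passing from an ultracontractive heat-kernel bound to a genuine CLR inequality requires either the Birman--Schwinger principle or Lieb's path-integral/trace argument (an integration in $t$ against a suitable kernel), which is substantially more than the one-line Golden--Thompson estimate you invoke. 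The Li--Yau rescaling trick used in the paper sidesteps this issue entirely, and since you are already adapting \cite{Cheng-Tysk} it is the more natural path here.
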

\proof
The proof is similar as that in \cite[Theorem 1]{Cheng-Tysk} (see also \cite{Li-yau}), so we just sketch it. Assume that $\Sigma$ is orientable, otherwise we pass to the orientable double cover. Recall that the $L_f$ operator is
\begin{equation*}
  L_f=\Delta_f+|A|^2+Ric_f(\nu,\nu).
\end{equation*}
Consider the weighted manifold $(\Sigma^n,g,e^{-f}dv)$. Denote $p(x)=\max\{\frac 12,|A|^2+Ric_f(\nu,\nu)\}$ and $D\subset\Sigma$ any compact domain. We know that (see \cite{Grig06}) there exists a unique heat kernel $H(x,y,t)$, a $C^{\infty}$ function on $\Sigma\times\Sigma\times\R^+$,  of the operator
\begin{equation*}
  \frac{\pt }{\pt t}-\frac 1{p(x)}\Delta_f
\end{equation*}
on $D$, which satisfies Dirichlet boundary condition if $\pt D\neq {\O}$. Let $\{\lambda_i\}_{i=0}^{\infty}$ be the Dirichlet eigenvalues of $\frac 1{p(x)}\Delta_f$ on $D$. Define
\begin{equation*}
  h(t)=\sum_{i=0}^{\infty}e^{-2\lambda_it}.
\end{equation*}
Then
\begin{equation*}
  h(t)=\int_D\int_D H^2(x,y,t)p(x)p(y)e^{-f(x)}dv(x)e^{-f(y)}dv(y).
\end{equation*}
Differentiation in $t$ and integration by parts yield
\begin{align*}
  \frac{dh}{dt}= & -2\int_D\int_D|\nabla_yH(x,y,t)|^2p(x)e^{-f(x)}dv(x)e^{-f(y)}dv(y).
\end{align*}
Using the H\"{o}lder inequality and the property of heat kernel, by arguing as the proof of Theorem 1 in \cite{Cheng-Tysk}, we have
\begin{align}\label{thm4-2-pf1}
  &h^{\frac{n+2}n}(t) (\int_Dp(x)^{\frac n2}e^{-f(x)}dv(x))^{-\frac 2n} \nonumber\\
   & \qquad \leq \int_Dp(x)(\int_DH^{\frac{2n}{n-2}}(x,y,t)e^{-f(y)}dv(y))^{\frac {n-2}n}e^{-f(x)}dv(x).
\end{align}
Replacing $\varphi=H^{\frac{2(n-1)}{n-2}}$ in the weighted Sobolev inequality \eqref{sobolev-weighted}, and applying the H\"{o}lder-inequality, we obtain
\begin{equation}\label{sobolev-2}
  \left(\int_{\Sigma}H^{\frac {2n}{n-2}}e^{-f}dv\right)^{\frac {n-2}n}\leq C\int_{\Sigma}\left(|\nabla H|^2+c_1^2H^2\right)e^{-f}dv,
\end{equation}
where we used the fact $H_f=0$ since $\Sigma$ is $f$-minimal; and $|\overline{\nabla}f|\leq c_1$ on $M$. Applying \eqref{sobolev-2} to \eqref{thm4-2-pf1}, we obtain
\begin{align}\label{thm4-2-pf2}
  h^{\frac{n+2}n}(t) (\int_Dp(x)^{\frac n2}e^{-f(x)}dv(x))^{-\frac 2n}\leq -\frac 12C\frac{dh}{dt}+2Cc_1^2h(t),
\end{align}
where we used the fact $p(y)\geq 1/2$ for all $y\in\Sigma$. From the differential inequality \eqref{thm4-2-pf2}, using the fact that $h(t)\ra\infty$ as $t\ra 0^+$, we have
\begin{align*}
h(t)\leq &\frac{(2C)^{\frac n2}c_1^n}{(1-e^{-\frac{8t}nc_1^2})^{\frac n2}}\int_{\Sigma}p(x)^{\frac n2}e^{-f(x)}dv(x).
\end{align*}
As in \cite{Cheng-Tysk,Li-yau}, we have
\begin{align*}
ind_f(D)\leq &\inf_{t>0}e^{2t}h(t)\leq  C\int_{\Sigma}p(x)^{\frac n2}e^{-f(x)}dv(x),
\end{align*}
where $C$ is a constant independent of $D$. Since $ind_f(\Sigma)= \sup_{D\subset\Sigma}ind_f(D)$, this finishes the proof of Theorem \ref{thm-index-up}.
\endproof
\begin{rem}
From the formula \eqref{q-confor}, we can see that although $f$-minimal hypersurface can be viewed as a minimal hypersurface after the conformal change of the ambient metric, the conclusion in Theorem \ref{thm-index-up} can not follow directly from Cheng-Tsyk's result \cite{Cheng-Tysk} after the conformal transformation. The conditions on $M$ and $f$ assumed in Theorem \ref{thm-index-up} are only used to guarantee that the Sobolev inequality \eqref{sobolev-2} holds.  Thus the index upper bound \eqref{indx-upperbd} also holds for $f$-minimal hypersurfaces $\Sigma$ in $(M^{n+1},g,e^{-f}d\mu)$ if \eqref{sobolev-2} holds on $\Sigma$.
\end{rem}

Theorem \ref{thm-index-up} and Proposition \ref{1.8} do not cover the general complete non-compact $f$-minimal hypersurfaces and self-shrinkers. We know that if a self-shrinker $\Sigma^n\subset\R^{n+1}$ arises from the Type-I singularity of mean curvature flow, then the second fundamental form of $\Sigma$ must be bounded and  $\Sigma$ has at most Euclidean volume growth. Then $\Sigma$ has finite weighted volume, and the right-hand side of \eqref{indx-upperbd-selfs1} is bounded above. Thus, if the conclusion in Proposition \ref{mainprop-indexup-ss1} also holds for general complete non-compact self-shrinker,  then we can obtain an upper bound on the $f$-index of the self-shrinker arising from Type-I singularity, and Corollary \ref{main-corr} would imply that the space of Type-I singularities of the mean curvature flow is compact in smooth topology with multiplicity one. About the lower bound of the $f$-index, there is one result by Hussey \cite{hussey} for self-shrinkers. In his thesis, Hussey proved that a flat hyperplane $\R^{n}$ through the origin, as a self-shrinker, has index 1; the cylinder $\mathbb{S}^k(\sqrt{2k})\times\R^{n-k}, 1\leq k\leq n$ has index $n+2$; any other smooth complete embedded self-shrinker without boundary and with polynomial volume growth has index at least $n+3$. See \cite{hussey} for details. We will investigate more estimates (upper bound and lower bound) on the $f$-index for $f$-minimal hypersurfaces, in particular for self-shrinkers in the future.

\section{$f$-minimal surfaces of finite $f$-index  in $(M^{3},g,e^{-f}d\mu)$}

In \cite{fisch-scho}, Fischer-Colbrie and Schoen showed that a complete stable minimal
surface $\Sigma$ in a complete manifold $(M^3,g)$ with nonnegative scalar curvature must
be either conformally the complex plane $\mathbb{C}$ or conformally a cylinder $\R\times \SS^1$. This was generalised by Espinar \cite{Espinar} to $f$-stable $f$-minimal surface $\Sigma$ in a smooth metric measure space $(M^{3},g,e^{-f}d\mu)$ with nonnegative Perelman's scalar curvature $R_f^{P}$, i.e. such $\Sigma$ is either conformally the complex plane $\mathbb{C}$ or conformally a cylinder $\R\times \SS^1$. To show this, he introduced the following key lemma:
\begin{lem}[\cite{Espinar}]\label{lem-espinar}
Let $\Sigma$ be a complete orientable $f$-stable $f$-minimal surface in $(M^{3},g,e^{-f}d\mu)$. Then
\begin{equation}
  \int_{\Sigma}(V-\frac 13K)\varphi^2dv\leq \int_{\Sigma}|\nabla\varphi|^2dv,
\end{equation}
for any $\varphi\in C_c^{\infty}(\Sigma)$, where $K$ is the Gaussian curvature of $\Sigma$ with respect to the induced metric from $(M^3,g)$ which we still denote by $g$, and
\begin{equation}\label{V-def}
  V=\frac 13\left(\frac 12R_f^P+\frac 12|A|^2+\frac 18|\nabla f|^2\right).
\end{equation}
In other words, the Schr\"{o}dinger operator $L=\Delta-\frac 13K+V$ is stable on $\Sigma$.
\end{lem}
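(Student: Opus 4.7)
My plan is to start from the $f$-stability inequality
\[
\int_\Sigma |\nabla \varphi|^2\, e^{-f}\, dv \;\geq\; \int_\Sigma \bigl(|A|^2 + Ric_f(\nu,\nu)\bigr)\varphi^2\, e^{-f}\, dv,
\]
which is valid for any $\varphi \in C_c^\infty(\Sigma)$ by the definition of $f$-stability in \S\ref{sec:vari}, and then massage it until it has precisely the form claimed. The first step is the standard trick of removing the weight by substituting $\varphi = e^{f/2}\psi$, where $\psi\in C_c^\infty(\Sigma)$. A direct calculation gives $|\nabla\varphi|^2 e^{-f} = |\nabla \psi|^2 + \psi\langle \nabla \psi,\nabla f\rangle + \tfrac14 \psi^2 |\nabla f|^2$, and after integrating the cross term by parts on $\Sigma$ (which is permissible since $\psi$ has compact support) the inequality becomes
\[
\int_\Sigma |\nabla \psi|^2\, dv \;\geq\; \int_\Sigma \Bigl(|A|^2 + Ric_f(\nu,\nu) + \tfrac12\Delta f - \tfrac14 |\nabla f|^2\Bigr)\psi^2\, dv.
\]

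The second step is to rewrite the integrand on the right in terms of $K$, $R_f^P$, $|A|^2$, $|\nabla f|^2$ and $\Delta f$. The Gauss equation for $\Sigma^2\subset M^3$ yields $2Ric(\nu,\nu) = R - 2K + H^2 - |A|^2$, while the general identity $\Delta f = \overline\Delta f - \overline\nabla^2 f(\nu,\nu) - H\langle\overline\nabla f,\nu\rangle$ combined with the $f$-minimal equation $H = \langle\overline\nabla f,\nu\rangle$ gives $\overline\nabla^2 f(\nu,\nu) = \overline\Delta f - \Delta f - H^2$. Adding these and using $H^2 = |\overline\nabla f|^2 - |\nabla f|^2$, one obtains
\[
|A|^2 + Ric_f(\nu,\nu) + \tfrac12\Delta f - \tfrac14|\nabla f|^2 \;=\; \tfrac12 R_f^P + \tfrac12|A|^2 + \tfrac18|\nabla f|^2 - K + \tfrac18|\nabla f|^2 - \tfrac12\Delta f,
\]
the first three positive terms collecting into $3V$ by the very definition of $V$ in \eqref{V-def}.

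The third step deals with the residual pair $\tfrac18|\nabla f|^2 - \tfrac12 \Delta f$. Integration by parts gives $-\tfrac12\int_\Sigma \psi^2\Delta f\,dv = \int_\Sigma \psi\langle\nabla\psi,\nabla f\rangle\,dv$, and then the elementary identity
\[
\bigl|\sqrt{2}\,\nabla \psi + \tfrac{1}{2\sqrt{2}}\,\psi\nabla f\bigr|^2 \;\geq\; 0
\]
(a sharp completion of the square) shows that $\tfrac18|\nabla f|^2\psi^2 + \psi\langle\nabla\psi,\nabla f\rangle \geq -2|\nabla \psi|^2$. Substituting this lower bound back into the previous step produces
\[
\int_\Sigma |\nabla \psi|^2\, dv \;\geq\; \int_\Sigma (3V - K)\psi^2\, dv - 2\int_\Sigma |\nabla \psi|^2\, dv,
\]
and dividing by $3$ yields the stated inequality.

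The main obstacle is purely bookkeeping: one must match the coefficients so that after the weight is removed and the Gauss equation is applied, the remaining combination of $|\nabla f|^2$ and $\Delta f$ can be absorbed into $|\nabla \psi|^2$ with exactly the constant needed to recover the factor $\tfrac13$ in front of $K$. The sharp choice $a=\sqrt{2}$, $b=1/(2\sqrt{2})$ in the square-completion is forced by the coefficient $\tfrac18$ built into $V$, so the statement of the lemma and the chosen normalization of $V$ are tightly coupled; no cruder application of Cauchy--Schwarz would produce the claimed constant.
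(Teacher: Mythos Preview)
Your proof is correct. The substitution $\varphi=e^{f/2}\psi$ to remove the weight, the Gauss-equation computation, and the completion of the square $|\sqrt{2}\,\nabla\psi+\tfrac{1}{2\sqrt2}\psi\nabla f|^2\ge 0$ all check out, and the constants fit together exactly as you claim to produce the factor $\tfrac13$.

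Note, however, that the paper does not give its own proof of this lemma: it is quoted verbatim from Espinar \cite{Espinar} and used as a black box in the proof of Theorem~\ref{thm-fisch}. Your argument is in fact essentially Espinar's original one, so there is no alternative route in the present paper to compare against.
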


We remark that in \cite{Liu}, Liu constructed an example to show that Fischer-Colbrie-Schoen's \cite{fisch-scho} result can not be generalised to the $f$-minimal surface in $(M^{3},g,e^{-f}d\mu)$ with $tr(Ric_f)\geq 0$.

In this section, we will consider the conformal property of the complete orientable $f$-minimal surfaces of finite $f$-index  in $(M^{3},g,e^{-f}d\mu)$. Recall that in \cite[Theorem 1]{fisch}, Fischer-Colbrie proved that a complete orientable minimal surface $\Sigma$ of finite index in a three manifold $(M^3,g)$ with nonnegative scalar curvature is conformally diffeomorphic to a Riemannian surface with a finite number of points removed. We will prove

\begin{thm}\label{thm-fisch}
Let $\Sigma$ be a complete orientable $f$-minimal surface of finite $f$-index in $(M^{3},g,e^{-f}d\mu)$ with $V\geq 0$, then there exists a complete conformal metric on $\Sigma$ with nonnegative Gaussian curvature outside a compact set. In particular, $\Sigma$ is conformally diffeomorphic to a Riemannian surface with a finite number of points removed.
\end{thm}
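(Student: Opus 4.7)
The plan is to adapt Fischer-Colbrie's argument for minimal surfaces of finite index in ambient manifolds of nonnegative scalar curvature to the $f$-minimal setting, using Espinar's Lemma \ref{lem-espinar} as the essential new ingredient in place of the Fischer-Colbrie-Schoen stability reformulation.

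\textbf{Step 1: Reduce to a stable end.} Since $\Sigma$ has finite $f$-index $I$, I invoke the identity $ind_f(\Sigma)=\sup_{\Omega\subset\subset\Sigma}ind_f(\Omega)$ from Section \ref{sec:vari} to extract a compact sub-domain $\Omega_0\subset\Sigma$ with smooth boundary and $ind_f(\Omega_0)=I$. A disjoint-support test-function argument (any compactly supported negative direction outside $\Omega_0$ would combine with the $I$ negative eigenfunctions of $L_f|_{\Omega_0}$ to contradict maximality of the index) shows that $Q_f^\Sigma(\varphi,\varphi)\geq 0$ for every $\varphi\in C_c^\infty(\Sigma\setminus\Omega_0)$, i.e. $\Sigma\setminus\Omega_0$ is $f$-stable. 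Because the proof of Lemma \ref{lem-espinar} uses only the $f$-stability inequality on compactly supported test functions, the same derivation applied to $\varphi\in C_c^\infty(\Sigma\setminus\Omega_0)$ shows that the Schr\"odinger operator
\begin{equation*}
\mathcal{L}\;:=\;\Delta-\tfrac{1}{3}K+V
\end{equation*}
is stable on $\Sigma\setminus\Omega_0$, where $K$ is the Gaussian curvature of $\Sigma$ with respect to the induced metric.

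\textbf{Step 2: Positive Jacobi function.} By the standard Fischer-Colbrie-Schoen construction, stability of $\mathcal{L}$ on $\Sigma\setminus\Omega_0$ yields a smooth $u>0$ satisfying $\mathcal{L}u=0$ there: take an exhaustion $\{\Omega_j\}_{j\geq 1}$ of $\Sigma\setminus\overline{\Omega_0}$ by relatively compact smooth subdomains, solve the first Dirichlet eigenvalue problem for $\mathcal{L}$ on each $\Omega_j$ with positive eigenfunction $u_j$ normalised at a fixed basepoint, and pass to a subsequential locally smooth limit via the elliptic Harnack inequality. Stability forces the limiting eigenvalue to be $\geq 0$, so the limit $u>0$ satisfies $\mathcal{L}u=0$, equivalently $\Delta u=(\tfrac{1}{3}K-V)u$.

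\textbf{Step 3: Conformal change with nonnegative curvature.} I then consider the conformal metric $\bar g:=u^{6}g$ on $\Sigma\setminus\Omega_0$. The conformal transformation formula gives $\bar K=u^{-6}(K-3\Delta\log u)$, and using
\begin{equation*}
3\Delta\log u \;=\; 3\bigl(\Delta u/u - |\nabla\log u|^2\bigr) \;=\; K-3V-3|\nabla\log u|^2,
\end{equation*}
one obtains
\begin{equation*}
\bar K \;=\; 3\,u^{-6}\bigl(V+|\nabla\log u|^2\bigr)\;\geq\;0
\end{equation*}
on $\Sigma\setminus\Omega_0$, by the hypothesis $V\geq 0$. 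A smooth cutoff extends $u$ to a positive smooth $\tilde u$ on all of $\Sigma$ that agrees with $u$ outside a neighbourhood of $\Omega_0$, so $\bar g:=\tilde u^6 g$ is a smooth conformal metric on $\Sigma$ with $\bar K\geq 0$ outside a compact set. Once $\bar g$ is shown to be complete, the negative part of $\bar K$ is compactly supported, hence in $L^1(\Sigma,\bar g)$, and Huber's theorem gives the desired conformal diffeomorphism with a closed Riemann surface minus finitely many points.

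\textbf{Main obstacle.} The non-trivial point is the completeness of $\bar g$, which amounts to showing that the positive solution $u$ does not decay too rapidly along the ends of $\Sigma$. The approach, following Fischer-Colbrie \cite{fisch}, is to combine the stability inequality for $\mathcal L$ with Harnack/Moser-type estimates for positive solutions of $\mathcal Lu=0$ to obtain an effective lower bound on $u$ along divergent sequences in $(\Sigma,g)$. Crucially, thanks to Espinar's reformulation the relevant operator $\mathcal L$ involves only the usual Laplacian rather than $\Delta_f$, so the minimal-surface arguments of Fischer-Colbrie carry over with only cosmetic changes; the $f$-weighted structure of the problem has already been absorbed into the potential $V$ and the reduced coefficient $-\tfrac{1}{3}K$.
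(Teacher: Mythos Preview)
Your proposal follows the same route as the paper's proof: reduce to an $f$-stable end, apply Espinar's Lemma~\ref{lem-espinar} to obtain stability of $\mathcal{L}=\Delta-\tfrac13K+V$ there, produce a positive solution $u$ of $\mathcal{L}u=0$, pass to the conformal metric $\hat g=u^6g$, and compute $\widehat K=3u^{-6}(V+|\nabla\log u|^2)\geq 0$ outside a compact set.

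The one point that needs sharpening is your sketch of the completeness step: it is \emph{not} obtained via Harnack/Moser estimates on $u$, nor does it use the stability inequality for $\mathcal L$ directly. Both the paper and Fischer-Colbrie instead take a minimizing $\hat g$-geodesic $\gamma$ parametrised by $g$-arclength $s$ and apply the second variation of arc-length to get $\int_0^\infty\bigl((d\phi/d\hat s)^2-\widehat K\phi^2\bigr)\,d\hat s\geq 0$; substituting $\phi=u^3\psi$, using $d\hat s=u^3\,ds$ together with the explicit formula for $\widehat K$ (in particular the term $|\nabla\log u|^2\geq (u'/u)^2$ along $\gamma$), and choosing $\psi(s)=s\,\xi(s)$ with a standard cutoff $\xi$ yields an inequality of the form $\int_0^R u^3\,ds\leq c\int_R^\infty u^3\,ds$, which forces $\int_0^\infty u^3\,ds=\infty$ and hence completeness of $\hat g$. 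Your final sentence---that Fischer-Colbrie's argument carries over with only cosmetic changes because $\mathcal L$ involves $\Delta$ rather than $\Delta_f$---is exactly right; only your description of the mechanism is off.
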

\proof
Since $\Sigma$ has finite $f$-index, then there exists a compact set $C$ in $\Sigma$ such that $\Sigma\setminus C$ is $f$-stable, see \cite[Proposition 1]{fisch}. From Lemma \ref{lem-espinar}, we know that the Schr\"{o}dinger operator $L=\Delta-\frac 13K+V$ is stable on $\Sigma\setminus C$. Then again by Fischer-Colbrie \cite[Proposition 1]{fisch}, there exists a positive function $u$ on $\Sigma$ such that $Lu=0$ on $\Sigma\setminus C$. As in \cite{Espinar}, we consider the conformal metric $\hat{g}=u^6g$ on $\Sigma$. The Gaussian curvature $\widehat{K}$ of $\hat{g}$ satisfies
\begin{align}\label{K-hat}
  \widehat{K} =& u^{-6}(K-3\Delta \log u)\nonumber \\
  = &u^{-6}(K-3\frac{\Delta u}u+3|\nabla \log u|^2) \nonumber\\
  =&3u^{-6}(V+|\nabla\log u|^2),
\end{align}
where in the last equality we used $0=Lu=\Delta u-\frac 13Ku+Vu$. By the assumption $V\geq 0$, we have $\widehat{K}\geq 0$ on $\Sigma\setminus C$.

We next show that the metric $\hat{g}=u^6g$ is a complete metric on $\Sigma$. Let $\gamma(t): [0,\infty)\ra \Sigma\setminus C$ be a minimising geodesic with respect to the metric $\hat{g}$, where $t$ is the arc-length w.r.t $g$ ($\gamma(t)$ can be obtained as \cite{fisch}). To show that $\hat{g}$ is complete, it suffices to show $\gamma$ has infinite length w.r.t $\hat{g}$, i.e., we need to show $\int_0^{\infty}u^3(\gamma(s))ds=\infty$. Since $\gamma$ is a minimising geodesic, the second variation formula for arc-length gives us
\begin{equation}\label{2nd-var-length}
  \int_0^{\infty}\left((\frac {d\phi}{d\hat{s}})^2-\widehat{K}\phi^2\right)d\hat{s}\geq 0
\end{equation}
for any smooth compactly supported function on $(0,\infty)$. Since $d\hat{s}=u^3ds$, from \eqref{K-hat} and \eqref{2nd-var-length}, we obtain
\begin{equation}\label{thm-fis-pf1}
  3\int_0^{\infty}u^{-5}(s)u'(s)^2\phi(s)^2ds\leq \int_0^{\infty}u^{-3}(s)\phi'(s)^2ds.
\end{equation}
Let $\phi=u^3\psi$ where $\psi$ has compact support on $(0,\infty)$. Then \eqref{thm-fis-pf1} yields
\begin{equation}\label{thm-fis-pf2}
  \int_0^{\infty}u(s)^3\left(\psi'(s)^2+2\psi(s)\psi''(s)\right)ds\leq 0.
\end{equation}
As in \cite{fisch}, we choose the test function $\psi(s)=s\xi(s)$, where $\xi(s)$ satisfies
\begin{equation*}
  \xi(s)=1,\quad s\in [0,R];\qquad \xi(s)=0, \quad s\in(2R,\infty)
\end{equation*}
and $|\xi'|\leq c/R, |\xi''|\leq c/{R^2}$ for $s\in [R,2R]$. Then for this $\psi(s)$, \eqref{thm-fis-pf2} yields
\begin{equation}\label{thm-fis-pf3}
  \int_0^{R}u(s)^3ds\leq c\int_R^{\infty}u(s)^3ds,
\end{equation}
where $c$ is a constant independent of $R$. \eqref{thm-fis-pf3} then implies that $\int_0^{\infty}u(s)^3ds=\infty$, and the metric $\hat{g}=u^3g$ is a complete metric on $\Sigma$ with nonnegative Gaussian curvature $\widehat{K}$ on $\Sigma\setminus C$.

Finally, from the last part in the proof of Theorem 1 in \cite{fisch},  we conclude that $\Sigma$ is conformally diffeomorphic to a Riemannian surface with a finite number of points removed.
\endproof
\begin{rem}
From the definition of $V$ in \eqref{V-def}, if the Perelman's scalar curvature $R_f^P\geq 0$, then $V\geq 0$. Thus Theorem \ref{thm-fisch} is clearly true if we replace the condition $V\geq 0$ by a stronger condition $R_f^P\geq 0$. However from the equation \eqref{scalar-rel}, we know that if $R_f^P\geq 0$, then the scalar curvature of the conformal metric satisfies $\tilde{R}\geq 0$. Therefore for $R_f^P\geq 0$ case, the conclusion of Theorem \ref{thm-fisch} is a direct consequence from Fischer-Colbrie's \cite{fisch} result by a conformal transformation.
\end{rem}
\begin{rem}
Our Theorem \ref{thm-fisch} is nontrivial, since from $V\geq 0$ we cannot say $\tilde{R}\geq 0$.
\end{rem}

\bibliographystyle{Plain}

\begin{thebibliography}{10}
\bibitem{Ang} S. Angenent, \emph{Shrinking doughnuts}, Nonlinear diffusion equations and their equilibrium
states (Gregynog 1989), Progr. Nonlinear Differential Equations Appl. vol. 7, Birkh\"{a}user, Boston, 1992.

\bibitem{brendle} Simon Brendle, \emph{Embedded self-similar shrinkers of genus $0$}, \href{http://arxiv.org/abs/1411.4640}{arXiv:1411.4640}.

\bibitem{Bat-Miran}Marcio Batista and Heudson Mirandola, \emph{Sobolev and isoperimetric inequalities for submanifolds in weighted ambient spaces},  Annali di Matematica, online first, doi:10.1007/s10231-014-0449-8.

\bibitem{Cao} Huai-Dong Cao,  {\it Recent progress on Ricci solitons,} Recent advances in geometric analysis, 1-38, Adv. Lect. Math., \textbf{11}, Int. Press, Somerville, MA, 2010.

\bibitem{Cao-Li}Huai-Dong Cao and Haizhong Li, \emph{A gap theorem for self-shrinkers of the mean curvature flow in arbitrary codimension}, Calc. Var. PDE, \textbf{46}, no.3--4 (2013): 879--889.

\bibitem{C-Espinar}Marcos P. Cavalcante and Jose M. Espinar, \emph{Halfspace type Theorems for self-shrinkers}, \href{http://arxiv.org/abs/1412.3754v1}{arXiv:1412.3754}.


\bibitem{CM2011} Tobias H. Colding and William P. Minicozzi II, {\it A course in minimal surfaces}, Graduate studies in Math., vol \textbf{121}, Amer. Math. Soc., Providence, Rhode Island, 2011.

\bibitem{CM09} Tobias H. Colding and William P. Minicozzi II, {\it Smooth compactness of self-shrinkers}, Comm. Math. Helv., \textbf{87}(2012), no. 2, 463--475.

\bibitem{CM2012} Tobias H. Colding and William P. Minicozzi II, \emph{Generic mean curvature flow I; generic singularities}, Annals of Math., \textbf{175}(2012), no. 2, 755--833.

\bibitem{Choi-1} Hyeong In Choi and Richard Schoen, {\it The space of minimal embeddings of a surface into a three dimensional manifold of positive Ricci curvature}, Invent. Math., \textbf{81} (1985), 387--394.

\bibitem{CW83}
H.~I.~Choi and A.~N.~Wang, \textit{A first eigenvalue estimate for minimal hypersurfaces}, J. Differential Geom. \textbf{18}(3) (1983), 559--562.

\bibitem{Cheng-Tysk}Shiu-Yuen Cheng and Johan Tysk. \emph{Schr\"{o}dinger operators and index bounds for minimal submanifolds}, Rocky Mountain J. of Mathematics, \textbf{24}, no. 3 (1994), 977--996.

\bibitem{CMZ} Xu Cheng, Tito Mejia and Detang Zhou, \emph{Stability and compactness for complete $f$-minimal surfaces},  to appear in Trans. of Amer. Math. Soc., \href{http://arxiv.org/abs/1210.8076}{arXiv:1210.8076}.

\bibitem{CMZ2} Xu Cheng, Tito Mejia and Detang Zhou, \emph{Eigenvalue estimate and compactness for closed f-minimal surfaces}, Pacific J. Math., \textbf{271} (2014), no. 2, 347--367.

\bibitem{Cheng-Zhou} Xu Cheng and Detang Zhou, \emph{Stability properties and gap theorem for complete $f$-minimal hypersurfaces}, \href{http://arxiv.org/abs/1307.5099}{arXiv:1307.5099}.

\bibitem{Ding-Xin} Qi Ding and Yuanlong Xin, {\it Volume growth, eigenvalue and compactness for self-shrinkers}, Asian J. Math., \textbf{17} (2013), no. 3, 443--456.

\bibitem{Drug}G. Drugan, \emph{An immersed $S^2$ self-shrinker}, to appear in Trans. Amer. Math. Soc., \href{http://arxiv.org/abs/1304.0032}{arXiv:1304.0032}.

\bibitem{Drug-K} G. Drugan and S.J. Kleene, \emph{Immersed self-shrinkers}, \href{http://arxiv.org/abs/1306.2383}{arXiv:1306.2383}.

\bibitem{Espinar}Jose M. Espinar, \emph{Manifolds with density, applications and gradient Schr\"{o}dinger operators}, \href{http://arxiv.org/abs/1209.6162v6}{arXiv:1209.6162}.

\bibitem{Eiji-M} N. Ejiri and M. Micallef, \emph{Comparison between second variation of area and second variation
of energy of a minimal surface,} Adv. Calc. Var. 1(3) (2008), 223--239.

\bibitem{fisch}D. Fischer-Colbrie, \emph{On complete minimal surfaces with finite Morse index in three
manifolds}, Invent. Math. \textbf{82} (1985), no. 1, 121--132.

\bibitem{fisch-scho}D. Fischer-Colbrie and Richard Schoen, The structure of complete stable minimal surfaces in $3$-manifolds of non-negative scalar curvature, Communications on Pure and Applied Mathematics, \textbf{33}, no.2 (1980): 199--211.

\bibitem{hussey}Caleb Hussey, \emph{Classification and Analysis of Low Index Mean Curvature Flow Self-Shrinkers}. Doctoral dissertation, Johns Hopkins University, 2012.

\bibitem{Impera-Ri} Debora Impera, Michele Rimoldi, \emph{Stability properties and topology at infinity of $f$-minimal hypersurfaces}, Geom. Dedicata (online first), \href{http://link.springer.com/article/10.1007/s10711-014-9999-6}{doi:10.1007/s10711-014-9999-6}.

\bibitem{Kap}  N. Kapouleas, S.J. Kleene and N.M. M{\o}ller, \emph{Mean curvature self-shrinkers of high genus:
non-compact examples}, to appear in J. Reine Angew. Math., \href{http://arxiv.org/abs/1106.5454}{arXiv:1106.5454}.

\bibitem{Grig06}Alexander Grigor¡¯yan, \emph{Heat kernels on weighted manifolds and applications}, Cont. Math \textbf{398} (2006): 93--191.

\bibitem{Liu}Gang Liu,  \emph{Stable weighted minimal surfaces in manifolds with nonnegative Bakry-Emery tensor}, Comm. Anal. Geom., \textbf{21} (2013), no.5, 1061--1079.

\bibitem{Li-Wei-self-shr}Haizhong Li and Yong Wei,\emph{ Classification and rigidity of self-shrinkers in the mean curvature flow,} J. Math. Soc. Japan, 2014, 66(3):709--734.

\bibitem{Li-Wei}Haizhong Li and Yong Wei, \emph{f-minimal surface and manifold with positive m-Bakry-\'{E}mery Ricci
curvature,} J. Geom. Anal., Vol 25, issue 1 (2015), 421--435.

\bibitem{Li-yau}Peter Li and Shing-Tung Yau, \emph{On the Schr\"{o}dinger equation and the eigenvalue problem.} Comm. Math. Phys., \textbf{88}, no.3 (1983): 309--318.

\bibitem{morg}Frank Morgan. \emph{Manifolds with density}. Notices of the AMS (2005): 853--858.

\bibitem{Mol} N M. M{\o}ller, \emph{Closed self-shrinking surfaces in $\R^3$ via the torus}. \href{http://arxiv.org/abs/1111.7318}{arXiv:1111.7318}

\bibitem{MW} Ovidiu Munteanu and Jiaping Wang, {\it Smooth metric measure spaces with non-negative curvature}, Comm. Anal. Geom., \textbf{19} (2011), no.5, 451--486.

\bibitem{Ngu}  X. H.  Nguyen,  \emph{Construction of complete embedded self-similar surfaces under mean curvature
flow. Part III},  Duke Math. J., \textbf{163}, no. 11, 2014: 2023--2056.

\bibitem{perel} G. Perelman, \emph{The entropy formula for the Ricci flow and its geometric applications}, \href{http://arxiv.org/abs/math/0211159}{arXiv math/0211159}.

\bibitem{Pigola-rimo}Stefano Pigola and Michele Rimoldi, \emph{Complete self-shrinkers confined into some regions of the space}, Annals of Global Analysis and Geometry, \textbf{45} no.1 (2014): 47--65.

\bibitem{b-sharp}Ben Sharp, \emph{Compactness of minimal hypersurfaces with bounded index}, \href{http://arxiv.org/abs/1501.02703}{arXiv: 1501.02703}.

\bibitem{sch-sim}R. Schoen and L. Simon, \emph{Regularity of stable minimal hypersurfaces}, Comm. Pure Appl. Math. \textbf{34} (1981), 741--797.

\bibitem{Wei-Wylie} Guofang Wei and Will Wylie, {\it Comparison geometry for the Bakry-\'{E}mery Ricci tensor}, J. Diff. Geom. \textbf{83}(2009), 377--405.

\bibitem{Lu}Lu Wang, \emph{A Bernstein type theorem for self-similar shrinkers}, Geom. Dedicata, \textbf{151} (2011),
297--303.
\end{thebibliography}

\end{document}